\newtheorem{theorem}{Theorem}[section]
\theoremstyle{plain}
\newtheorem{proposition}[theorem]{Proposition}
\newtheorem{lemma}[theorem]{Lemma}
\newtheorem{corollary}[theorem]{Corollary}
\theoremstyle{definition}
\newtheorem{example}[theorem]{Example}
\newtheorem{definition}[theorem]{Definition}
\theoremstyle{remark}
\newcommand{\dif}{\mathrm{Diff}}
\newcommand{\difH}{\mathrm{Diff}(\Delta^H, M)}
\newcommand{\difV}{\mathrm{Diff}(\Delta^V, M)}
\newcommand{\dete}[2]{ {  \begin{array}{c}{ {}_{#1}}\\
\mathrm{det}\\ { {}^{#2}}\end{array}}}
\newcommand{\Prob}{\mathbf P}
\newcommand{\R}{\mathbf R}
\newcommand{\N}{\mathbf N}
\newcommand{\Z}{\mathbf Z}
\newcommand{\F}{\mathcal F}
\begin{document}
\begin{center}


 {\Large {\bf Topology of foliations and decomposition of
stochastic flows of diffeomorphisms\\[3mm]
 }}

\end{center}

\vspace{0.3cm}

\begin{center}
{\large Alison M. Melo\footnote{E-mail: alison.melo@univasf.edu.br.
Research
supported by CNPq 142084/2012-3.} \ \ \ \ \ \ \ \ \ \ \ \ \
 { Leandro Morgado}\footnote{E-mail:
leandro.morgado@ufsc.br.
Research supported by  FAPESP 11/14797-2.}

\medskip

{ Paulo R. Ruffino}\footnote{Corresponding author, e-mail:
ruffino@ime.unicamp.br.
Partially supported by FAPESP  12/18780-0,
11/50151-0 and CNPq 477861/2013-0.}}

\vspace{0.2cm}

\textit{Departamento de Matem\'{a}tica, Universidade Estadual de Campinas, \\
13.083-859- Campinas - SP, Brazil.}

\end{center}

\begin{abstract}
Let $M$ be a compact manifold equipped with a pair of complementary
foliations, say horizontal and vertical. In Catuogno, Silva and 
Ruffino \cite{CSR2} it is shown that, up
to a stopping time $\tau$, a stochastic flow of local diffeomorphisms
$\varphi_t$ in $M$ can be written as  a 
Markovian process in the subgroup of diffeomorphisms which preserve the 
horizontal foliation composed with a process in the subgroup of 
diffeomorphisms 
which preserve the vertical foliation. Here, we discuss
topological 
aspects of this decomposition. The main result guarantees the 
global 
decomposition of a flow if it preserves the orientation of a
transversely orientable  foliation. In the  last 
section, we present an It\^o-Liouville formula for subdeterminants of 
linearised 
flows. We use this formula to obtain sufficient conditions for 
the existence of the 
decomposition for all $t\geq 0$.
\end{abstract}

\noindent {\bf Key words:} Stochastic flow of diffeomorphisms, 
decomposition of diffeomorphisms, biregular foliations, tranversely orientable 
foliation.

\vspace{0.3cm}
\noindent {\bf MSC2010 subject classification:} 60H10, 58J65, 57R30.

\section{Introduction}

Consider a stochastic flow $\varphi_t$ of diffeomorphisms in a compact 
differentiable
manifold $M$ endowed with some structure (Riemannian, Hamiltonian,
foliation, etc). In many situations, the decomposition of $\varphi_t$ 
with components in subgroups of
the group of diffeomorphisms $\dif(M)$ provide interesting dynamical or 
geometrical information of the stochastic system. In the literature, this kind 
of decomposition has been studied in several  frameworks and with different
aimed subgroups; among others, see e.g. Bismut \cite{Bismut}, Kunita 
\cite{Kunita-1}, \cite{Kunita-2},  Ming Liao \cite{ML} and some of our previous 
works \cite{CSR}, \cite{Colonius-Ruffino}, \cite{MR}, \cite{Ruffino}. 

In particular, in Catuogno, da Silva and Ruffino \cite{CSR2}, the authors
consider a pair of complementary distributions in a differentiable manifold 
$M$, in the sense that each tangent space splits into a direct sum of two 
subspaces depending differentiably on $M$. These subspaces are called, by
convenience, horizontal and vertical distributions. In \cite{CSR2} it is shown
that locally, up to a stopping time $\tau$, a stochastic flow $\varphi_t$ in
$M$ can be decomposed as $\varphi_t = \xi_t \circ \psi_t$, where $\xi_t$ is a
diffusion in the group of diffeomorphisms $\difH$ generated by horizontal
vector fields, and $\Psi_t$ is a process in the group of diffeomorphisms 
$\difV$ 
generated by vertical vector fields. The authors also present stochastic 
differential equations on the corresponding infinite dimensional Lie
subgroups for the components $\xi_t$ and $\psi_t$. The infinite dimensional Lie 
group structure considered in this case is described in Milnor \cite{Milnor}, 
Neeb \cite{Neeb} and Omori \cite{Omori}. The stopping time $\tau$ mentioned above, which restricts the
time where the decomposition exists, appears due to an explosion in the equation 
of one of the components of the decomposition, with initial conditions at the 
identity. It is  related to the degeneracy of the dynamics of one distribution with respect to the other.

The initial motivation for this kind of decomposition comes from a system whose flow is originally energy perserving, hence trajectories lies on energy levels. After a perturbation by transverse vector fields, hence destroying this foliated behaviour, the decomposition allows to study separatly an energy preserving component and a transverse component. This is, for instance the context of an averaging principle in Gargate-Ruffino \cite{Gargate-Ruffino}, where the vertical component is rescaled by $\epsilon^{-1}$, see also Li \cite{Li} in the Hamiltonian context. Flows in a principal fibre bundle with an affine connection gives another class of examples where the distributions are not necessarily integrable, hence generating holonomy.  In Melo, Morgado and Ruffino 
\cite{MMR}, the same decomposition is considered for the case of stochastic 
flows with jumps, using Marcus equation, as in Kurtz, Pardoux and Protter 
\cite{KPP}.

In this article, we work with the same structure of \cite{CSR2}, but
assuming that the distributions are integrable:  The manifold $M$ is endowed with
two complementary foliations $\mathcal{H}$ and  $\mathcal{V}$, i.e., such that 
the leaves of the vertical foliation  $\mathcal{V}$ are
transverse
to the leaves of the horizontal foliation $\mathcal{H}$, in the sense that
$TM= T\mathcal{H}\oplus T\mathcal{V}$. We use the notation $(M,\mathcal{H},
\mathcal{V})$ for this space. The action of the subgroup of diffeomorphisms 
$\difH $ 
fixes each horizontal leaf and the action of $\difV $ fixes  vertical 
leaves.  The dynamics in $M$ is given by a stochastic flow of (local) diffeomorphisms
$\varphi_t$ generated by a Stratonovich SDE on $M$:

\begin{equation} dx_{t} \ =  \sum_{r=0}^{m}
X_{r}(x_{t})\circ dW_{t}^{r} \label{equacao original},
\end{equation}
where $W^0_t = t$, $ (W^1, \ldots , W^m)$ is a Brownian motion in $\R^m$
constructed on a filtered complete probability space
$(\Omega, \F, \F_t, \Prob)$ and $X_0, X_1, \ldots , X_m$ are smooth vector
fields in $M$. In this situation, there exists a stochastic solution flow
of (local)
diffeomorphisms $\varphi_t$, see e.g. among others the classical Kunita
\cite{Kunita-1}, Elworthy \cite{elworthy82}. The flow is assumed to be
complete, such that its explosion time is not a restriction for the 
decomposition. The fact that we deal with an stochastic flow of a 
Stratonovich SDE 
is convenient, say, to get explicit equations for the components $\xi_t$ and 
$\psi_t$, and to find an It\^o formula for subdeterminants involved in the 
context (Section 3.1). Nevertheless, here, many of our results on decomposition 
apply also to a continuous family $\phi_t$ of diffeomorphisms such that 
$\phi_0=Id$, 
which not necessarily satisfies the cocycle property (e.g. Theorem \ref{Thm: 
globdec}).

Our aim here is to study topological features on the foliations 
relating to this decomposition. It is particularly interesting the fact that 
decomposability
of a flow is strongly related with the geometrical concept of transverse
orientation in a pair of foliations. Precisely, the influence of the 
topology of the foliations appears  in two intertwined  categories: analytical 
and topological aspects. The analytical approach is presented in Section 1.2. 
It 
is essentially guided by  the subdeterminant of the linearised flow: 
 it gives us a necessary and sufficient condition for the local existence of 
the decomposition.

For the topological aspects, initially, we consider a pair of horizontal 
and vertical 
foliations, where  there might be a set of points which one can 
not 
reach from a point $x_0$ in the manifold $M$, by taking a concatenation of a 
vertical path with a horizontal path, in this order. We discuss this property 
of 
attainability and its consequences for the decomposition in Section 2.1. In 
Section 2.2, we study the dynamics of the stochastic flow $\varphi_t$ 
in the leaves, and how this action might be a restriction for the 
decomposition. 
Our main result is in Section 2.3, where we proof that, under the 
condition of transverse orientability of the horizontal foliation, the global 
decomposition exists if and only if the family of diffeomorphisms preserves this orientation.

In the last section we present 
an It\^o-Liouville formula for subdeterminants of the  linearized stochastic flow $D\varphi_t$. 
Using this formula and  Cauchy-Binet identity (see e.g. 
Tracy and Widom \cite{Tracy}), we discuss a pair of sufficient condition for the existence 
of the decomposition of the flow $\varphi_t$, for $t\geq 0$.

\subsection{Foliations}
We recall briefly some geometrical facts about
foliations in a differentiable manifold. For more details, see e.g., among
many others, Candel and Conlon \cite{Candel-Conlon}, Tamura \cite{Tamura},
Walcak \cite{Walcak}. Let $M$ be a Riemannian manifold of dimension
$n$. 

\begin{definition}  A {\it foliation} with codimension $k$ in $M$
is a partition $\mathcal{F}=\{\mathcal{F}(p):p\in M\}$
of $M$ endowed with
a $C^{\infty}$-atlas $\{(\phi_{\alpha}, U_{\alpha})\}_{\alpha\in \mathcal I}$
where
$$
\begin{array}{crcl}\phi_{\alpha}:&U_{\alpha}&\rightarrow
&\mathbb{R}^{n-k}\times
\mathbb{R}^{k}\\
                                               & p&\mapsto & (x_{\alpha}(p),
y_{\alpha}(p)),
\end{array}
$$
 such that each local chart $\phi_{\alpha}$ satisfies the property that if $
y_{\alpha}(p)=y_{\alpha}(q)$, then $ \mathcal{F}(p)=\mathcal{F}(q),
$
for $p$ and $q$ in $U_{\alpha}$.
\end{definition}

The atlas  $\{\phi_{\alpha}, U_{\alpha}\}$ above is called a {\it foliated
atlas}.
Each $\mathcal{F}(p)$
is called the {\it leaf} of $p$.
A set $P\subset M$ is a {\it plaque} of the foliation $\mathcal{F}$ if it is an
open submanifold of an $(n-k)$-dimensional leaf, precisely: $P$ has the form
$P=\phi_{\alpha}^{-1}(D)$ where $D$ is an open disk  of dimension $n-k$
contained in a level subset $\{p\in U_{\alpha}:y_{\alpha}(p)=y_0\}$.
Each leaf of the foliation is the image of an immersion of a complete
manifold into $M$. Given  $N\subset M$ the saturation $\mathcal{F}(N)$ of $N$
by $\mathcal{F}$ is the set $\mathcal{F}(N)= \cup_{p\in N}\mathcal{F}(p)$.

With refinements, if necessary, one can always obtain a regular
atlas $\{\phi_{\alpha}, U_{\alpha}\}$, i.e. such that $U_{\alpha}$ is
precompact in a bigger foliated domain, the cover $\{U_{\alpha}\}$ is locally
finite and the interior of each closed plaque of $U_{\alpha}$ meets at most one
plaque in the closure of $U_{\beta} $, see \cite[Chap.1]{Candel-Conlon}.

Given two local charts $(\phi_1,U_1)$ and $(\phi_2,U_2)$ in the  regular
foliated
atlas $\{(\phi_{\alpha},U_\alpha)\}$ such that $U_1\cap U_2\not=\emptyset$
we define the functions $y_i:\phi_2( U_1\cap U_2) \rightarrow \mathbb{R}$ by
$\phi_1\circ\phi_2^{-1}=(y_1,...,y_n).$
Let $x_i$ denote the usual coordinates on $\mathbb{R}^n$,
we say that the atlas $\{(\phi_{\alpha},U_\alpha)\}$  is {\it transversely
orientable}
if
$$ 
\det 
\displaystyle\frac{\partial(y_{n-k+1},...,y_n)}{\partial(x_{n-k+1},..., x_n)
}> 0
$$
everywhere in the domain, for any two local charts $(\phi_1,U_1)$ and
$(\phi_2,U_2)$ in
$\{(\phi_{\alpha},U_\alpha)\}$
with  $U_1\cap U_2\not=\emptyset$. See an example of non-transversely
orientable foliation in Example \ref{Ex: non-orientable} below.

An important property we use in the next section is the uniform
transverselity  of the foliation, see e.g. Camacho and Lins-Neto
\cite{Camacho-Lins}:


\begin{theorem}\label{Thm: transunif} Consider a foliated space $(M, 
\mathcal{F})$
and a fixed leaf
$F$. Given two  $k$-dimensional
 submanifolds  $N_1$ and $N_2$ which are transverse to $F$, there exist
disks $D_1\subset N_1$
and $D_2\subset N_2$ and a
diffeomorphism
$f:D_1\rightarrow D_2$ such that for any leaf $F'$ with $F'\cap D_1 \not=
\emptyset$ we have that
$f(F' \cap D_1)= F'\cap D_2 $.
 \end{theorem}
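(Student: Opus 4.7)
The plan is to realise $f$ as a holonomy map along a path in $F$ joining $N_1$ to $N_2$.

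Since $N_1$ and $N_2$ are $k$-dimensional and transverse to the $(n-k)$-dimensional leaf $F$, fix intersection points $p_i\in N_i\cap F$. The leaf $F$ is path-connected, so pick a continuous path $\gamma:[0,1]\to F$ with $\gamma(0)=p_1$ and $\gamma(1)=p_2$. Its image is compact; using the regular foliated atlas $\{(\phi_\alpha,U_\alpha)\}$ and a Lebesgue-number argument, cover $\gamma([0,1])$ by finitely many foliated charts $(U_0,\phi_0),\ldots,(U_\ell,\phi_\ell)$, ordered so that $p_1\in U_0$, $p_2\in U_\ell$, and every consecutive pair $U_{i-1}\cap U_i$ is nonempty and contains a plaque of $\gamma$. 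Through each $\gamma(t_i)\in U_{i-1}\cap U_i$ choose an auxiliary $k$-dimensional transversal $T_i$, and set $T_0:=N_1$ (near $p_1$) and $T_{\ell+1}:=N_2$ (near $p_2$).

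Within a single chart $U$ with product coordinates $(x,y)\in\mathbb{R}^{n-k}\times\mathbb{R}^k$, the plaques are the $y$-level sets and any $k$-dimensional submanifold transverse to the plaques projects diffeomorphically onto an open set of the $y$-coordinate. Thus, for two such transversals $T$ and $T'$ in $U$ whose $y$-projections overlap, the map sending a point of $T$ to the unique point of $T'$ with the same $y$-coordinate is a local diffeomorphism that takes each plaque intersection to the same plaque intersection. Apply this inside each $U_i$ to get a local diffeomorphism $h_i\colon A_i\to B_i$ with $A_i\subset T_i$ and $B_i\subset T_{i+1}$. Working backward from $i=\ell$ to $i=0$, shrink the successive transversals so that the image of $h_{i-1}$ lies in the domain of $h_i$. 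The composition
\[
f:=h_\ell\circ h_{\ell-1}\circ\cdots\circ h_0\colon D_1\longrightarrow D_2,\qquad D_1\subset N_1,\ D_2\subset N_2,
\]
is then a diffeomorphism between open $k$-disks centred at $p_1$ and $p_2$.

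Finally, verify the leaf property. By construction each $h_i$ sends a point of $T_i$ to a point of $T_{i+1}$ lying in the same plaque of $U_i$; hence for any $x\in D_1$, the points $x$ and $f(x)$ are linked by a finite chain of plaques, all contained in one and the same leaf. Therefore, if $x\in F'\cap D_1$ then $f(x)\in F'\cap D_2$, giving $f(F'\cap D_1)\subset F'\cap D_2$; the reverse inclusion follows by applying the same argument to $f^{-1}$, which is the holonomy along the reversed path. The main technical difficulty is not the construction itself but the inductive shrinking that keeps all successive plaque-projections well-defined on a common disk $D_1$. This is controlled by the regularity of the atlas assumed in the previous subsection (precompact domains, locally finite cover, interior plaques meeting at most one plaque in the closure of a neighbouring chart), which yields the uniform bounds required for the finite iteration to terminate.
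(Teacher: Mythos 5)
Your holonomy-along-a-path construction is essentially correct: covering a leafwise path from $p_1\in N_1\cap F$ to $p_2\in N_2\cap F$ by a finite chain of foliated charts and composing the plaque-respecting projections between successive transversals is the standard proof of this statement, and your remark that the equality $f(F'\cap D_1)=F'\cap D_2$ (not just inclusion) follows because $f^{-1}$ also preserves leaves is exactly the needed final step. Note that the paper itself gives no proof of this theorem --- it is quoted as a known result from Camacho and Lins Neto, where the argument is precisely the one you give; the only point you leave implicit is that $\gamma(t_i)$ and $\gamma(t_{i+1})$ lie in the \emph{same} plaque of $U_i$ (because $\gamma([t_i,t_{i+1}])$ is a connected subset of $F\cap U_i$), which is what makes each local projection $h_i$ defined near $\gamma(t_i)$ and send it to $\gamma(t_{i+1})$.
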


For a pair of complementary foliations in $M$ we have:

\begin{definition}  A birregular atlas on
$(M,\mathcal{H},\mathcal{V})$ is an atlas
$\{(\phi_{\alpha},y_\alpha,U_\alpha)\}_{\alpha \in \mathcal{I}}$
which is simultaneously a regular foliated atlas for $\mathcal{H}$ and
$\mathcal{V}$.
\end{definition}

It is well known that there always exists a birregular foliated atlas for
$(M,\mathcal{H},\mathcal{V})$, see e.g. \cite[Prop. 5.1.4]{Candel-Conlon}.

\subsection{Characterization of local decomposition}

Consider a diffeomorphism $\varphi:  U \rightarrow V$, with $U$ and $V$ 
open subsets of $\R^n$. The product $ \R^{n-k} \times \R^{k}$ is a canonical 
Cartesian pair of foliations of  $\R^n$. 
With respect to this product, write $\varphi = (\varphi^1 (x,y), \varphi^2 
(x,y))$, i.e.  $x$ and $ \varphi^1(x,y)$ belong to $ \R^{n-k}$ and $y, 
\varphi^2(x,y) \in \R^k$. 
An
analytical restriction for the local decomposition of
$\varphi$ appears related
with the  subdeterminant of the derivative of $\varphi$:

\begin{proposition} \label{Prop: analyt local obstruct decomp}
There exists a unique (up to reduction in the domain) decomposition $\varphi= 
\xi \circ
\psi$ in a neighbourhood of $(x,y)$ if and only if
\[
 \det
\frac{\partial \varphi^2(x,y)}{\partial y} \neq  0.
\]
Moreover, if $t\mapsto \varphi_t$ is continuous (with respect to, say, $C^k$ 
topology) and satisfies the determinant condition above, then $\xi_t$ and 
$\psi_t$ are also continuous with respect to time $t$.
\end{proposition}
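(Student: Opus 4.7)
The plan is to translate the decomposition problem into explicit coordinate equations and then recognise it as an instance of the inverse function theorem. Working in a biregular chart, the manifold is locally modelled on $\R^{n-k}\times\R^{k}$ with horizontal leaves $\R^{n-k}\times\{y\}$ and vertical leaves $\{x\}\times\R^{k}$. Since an element of $\difH$ must preserve every horizontal leaf pointwise as a set, it is forced to take the form $\xi(x,y)=(\xi^{1}(x,y),y)$; dually, every element of $\difV$ takes the form $\psi(x,y)=(x,\psi^{2}(x,y))$. Substituting into $\varphi=\xi\circ\psi$ and matching components yields the \emph{decomposition equations}
\[
\varphi^{2}(x,y)=\psi^{2}(x,y), \qquad \varphi^{1}(x,y)=\xi^{1}(x,\psi^{2}(x,y)).
\]

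Necessity of the determinant condition and uniqueness both fall out of these equations. The first equation forces $\psi^{2}=\varphi^{2}$; the block-triangular Jacobian of $\psi$ then gives $\det D\psi=\det(\partial\varphi^{2}/\partial y)$, which must be nonzero at $(x,y)$ because $\psi$ is a local diffeomorphism, and $\xi^{1}$ is determined by the second equation. For sufficiency I would run the construction in reverse: define $\psi(x,y):=(x,\varphi^{2}(x,y))$ and, since its Jacobian has determinant $\det(\partial\varphi^{2}/\partial y)\neq 0$, invoke the inverse function theorem to obtain a local inverse of the form $\psi^{-1}(x,z)=(x,\eta(x,z))$. Setting $\xi:=\varphi\circ\psi^{-1}$ and using the identity $\varphi^{2}(x,\eta(x,z))=z$ yields $\xi(x,z)=(\varphi^{1}(x,\eta(x,z)),z)$, which verifies $\xi\in\difH$.

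For the time-continuity statement, the family $\psi_{t}(x,y)=(x,\varphi_{t}^{2}(x,y))$ inherits $C^{k}$-continuity directly from $\varphi_{t}$, and the standard parametrised inverse function theorem produces a $C^{k}$-continuous inverse $\psi_{t}^{-1}$; composition then gives continuity of $\xi_{t}=\varphi_{t}\circ\psi_{t}^{-1}$. The one point requiring a little care — and the nearest thing to an obstacle — is choosing a neighbourhood on which the determinant condition holds uniformly for $t$ in a small interval, so that the inverses exist on a common domain; this is handled by shrinking the chart using continuity of $t\mapsto\det(\partial\varphi_{t}^{2}/\partial y)$. Otherwise the proposition is essentially a dressed-up inverse function theorem.
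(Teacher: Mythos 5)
Your proposal is correct and follows essentially the same route as the paper: force $\psi=(x,\varphi^{2}(x,y))$, set $\xi=\varphi\circ\psi^{-1}$, and reduce everything (existence, uniqueness, and time-continuity) to the inverse function theorem in a biregular chart. You simply spell out the coordinate equations and the parametrised inverse function theorem that the paper leaves implicit.
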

\begin{proof}
 In fact, if $\varphi_t= (\varphi_t^1, \varphi_t^2)$ then 
 \[
  \psi_t = (Id_{\R^{n-k}}, \varphi_t^2),
 \]
and 
 \[
  \xi_t = ( * , Id_{\R^{k}})= \varphi_t \circ \psi_t^{-1}.
 \]
The proof is a simple application of the theorem of inverse functions, see
also Melo, Morgado and Ruffino \cite{MMR}. Continuity of  $\xi_t$ 
and $\psi_t$ follows directly from the continuity of the components in the formulae above.

\end{proof}

Applying this characterization into  flows in $\R^n$, we 
have that there exists the decomposition of an stochastic flow $\varphi_t= 
\xi_t \circ
\psi_t$  up to a stopping  time $\tau$ in a neighbourhood of the
initial condition $(x,y)$, where 
\[
 \tau = \sup \{t >0;  \det
\frac{\partial \varphi_s^2(x,y)}{\partial y}\neq 0, \mbox{ for all } 0\leq s 
\leq t \}.
\]
In particular, if  $(M, \mathcal{H, V})$ is a product space $M= H \times V$ with $H$ 
and $V$ two differentiable manifolds and  a 
diffeomorfism $\varphi$ 
sends each vertical
leaf entirely into a vertical leaf, then the determinant never vanishes,
hence the decomposition $\varphi= \xi
\circ \psi$ holds. For example, linear systems
with spherical (horizontal)  and radial (vertical)  foliations in $\R^n 
\setminus \{0\}$: in this case, vertical leaves are sent to vertical leaves, 
hence
the decomposition holds for all $t\geq 0$, see \cite{MMR}. 
On the other hand, consider the simple example of a 
linear rotation in $\R^2$ endowed with the
canonical Cartesian (horizontal and vertical) foliations. We have the 
decomposition:
\[
\varphi_t = \left(
                                \begin{array}{cc}
                                  \cos t & -\sin t \\
                                  \sin t & \cos t
                                \end{array}
                              \right)=
                              \left(
                                \begin{array}{cc}
                                  \sec t & -\tan t \\
                                  0 & 1
                                \end{array}
                              \right)
                              \left(
                                \begin{array}{cc}
                                  1 & 0 \\
                                  \sin t & \cos t
                                \end{array}
                              \right).
\]
Clearly, the vertical leaves are not preserved. There exists an 
analytical obstruction when $t=\frac{\pi}{2}$. So,
the stopping time $\tau = \frac{\pi}{2}$. In the next section, the same
example is interpreted as a topological obstruction at $t=\pi/2$, since
the dynamics of vertical leaves collapses on horizontal leaves (Proposition 
\ref{Prop: intersecao}).

 \section{Topological aspects on global decomposition }

 We distinguish three kinds of topological aspects of the foliations related 
with the existence of the decomposition: the first one concerns the limitation 
on the attainability of trajectories starting at an initial condition and 
running exclusively along a vertical trajectory concatenated with a horizontal 
trajectory. The second aspect concerns the effect of the 
dynamics on the leaves. Finally, the third aspect concerns  
transversely orientation of the horizontal foliation. Our main 
result guarantees that if the flow preserves this orientation, then 
the flow is globally decomposable.

 \subsection{Attainability}
In many pairs of foliations, given
a starting point $x_0$ there might be a set
of points which one can not reach from $x_0$ by taking a concatenation of a
vertical path with a horizontal path, in this order. See e.g. Figure 1, where,
say the horizontal leaves are represented by bold curves and vertical leaves
are represented by thin curves in $\R^2\setminus \{0\}$. For $x_0=(1,1)$, the 
attainable points is the
open set above the line $y=-x$.
\begin{figure}[htb!]\label{Teia}
\centering
\includegraphics[scale=0.16]{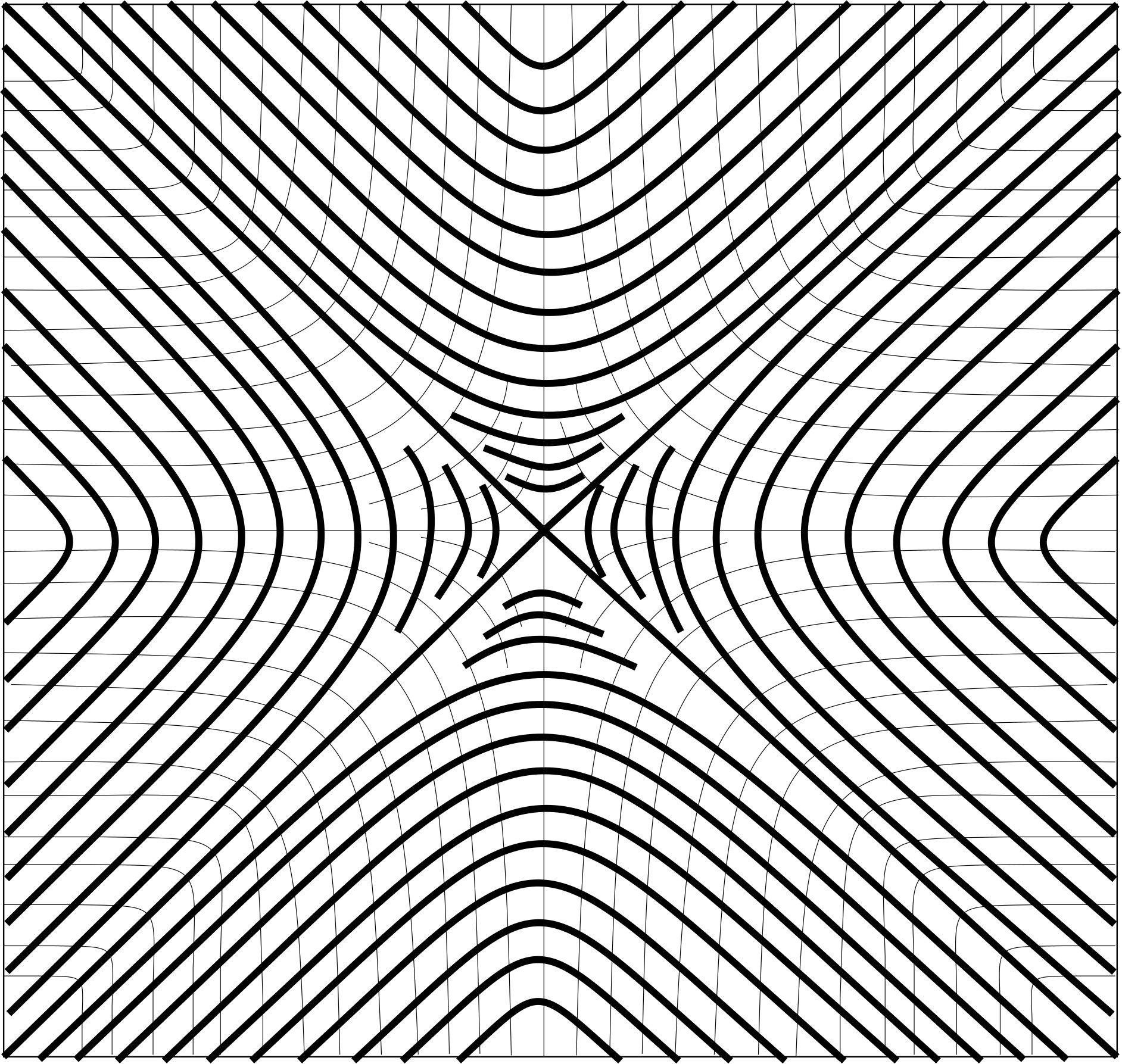}
\caption{Pair of foliations with $\mathcal{A}(x)\neq M$.}
\end{figure}

\noindent This idea leads to the following:
\begin{definition} The attainable points from $x \in M$ with respect to the
pair of foliations $(M,\mathcal{H}, \mathcal{V})$ is the set
\[
 \mathcal{A}(x)= \mathcal{H}(\mathcal{V}(x)).
\]
\end{definition}
Clearly, $\mathcal{A}(x)$ is horizontally saturated and,  if a
diffeomorphism $\varphi$ is
decomposable in a neighbourhood of $x$,
then $\varphi(x) \in
\mathcal{A}(x)$. Hence, non-attainability is an intrinsic obstruction
for the decomposition. Thinking on reversibility and  commutativity of the
decomposition, we present the following:


\begin{definition} The co-attainable set of $x\in M$ with respect to the
pair of foliations $(M,\mathcal{H}, \mathcal{V})$ is the set
$$
\mathcal{C}(x)=\mathcal{H}(\mathcal{V}(x))\cap\mathcal{V}(\mathcal{H}(x)).
$$
\end{definition}

A point $y\in M$ belongs to $\mathcal{C}(x)$ if $y \in \mathcal{A}(x)$ and $x
\in \mathcal{A}(y)$.
Note that, for
each $x \in M$,
the sets $\mathcal{A}(x)$ and $\mathcal{C}(x)$ are open since the leaves of
$\mathcal{H}$ are everywhere
 transverse to the leaves of $\mathcal{V}$.
Next we present a property of the attainable sets  $\mathcal{A}(x)$.

\begin{proposition} Let $M$ be a compact connected manifold.
If for a certain $x\in M$ we have $\mathcal{A}(x)=\mathcal{C}(x)$, then
$\mathcal{A}(x)=M$.
\end{proposition}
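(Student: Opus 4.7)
The strategy is to show that $\mathcal{A}(x)$ is both open and closed in $M$; combined with the connectedness of $M$ and $x \in \mathcal{A}(x)$, this will give $\mathcal{A}(x) = M$. Openness was already noted (it follows from the transversality of the two foliations), so the main task is closedness.

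First I would take $y \in \overline{\mathcal{A}(x)}$ and choose, using the existence of a birregular foliated atlas, a chart $U$ about $y$ with coordinates $(a,b)\in\mathbb{R}^{n-k}\times\mathbb{R}^{k}$ in which horizontal plaques are the level sets $\{b=\mathrm{const}\}$ and vertical plaques are $\{a=\mathrm{const}\}$. Picking $y_0=(a_0,b_0)\in U\cap\mathcal{A}(x)$, the initial observation is that the entire horizontal plaque of $y_0$ in $U$ lies in $\mathcal{A}(x)$, since $\mathcal{A}(x)=\mathcal{H}(\mathcal{V}(x))$ is $\mathcal{H}$-saturated. Applying the hypothesis $\mathcal{A}(x)=\mathcal{C}(x)$ at each point $(a,b_0)$ of this plaque gives $(a,b_0)\in \mathcal{V}(\mathcal{H}(x))$; using that $\mathcal{V}(\mathcal{H}(x))$ is $\mathcal{V}$-saturated, the whole vertical plaque $\{a\}\times\mathbb{R}^{k}\cap U$ sits in $\mathcal{V}(\mathcal{H}(x))$. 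Taking the union over $a$ shows $U\subseteq \mathcal{V}(\mathcal{H}(x))$, so in particular $y\in \mathcal{V}(\mathcal{H}(x))$.

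The hard part will be promoting $y\in \mathcal{V}(\mathcal{H}(x))$ to $y\in\mathcal{A}(x)=\mathcal{H}(\mathcal{V}(x))$. For this I would invoke Theorem~\ref{Thm: transunif} on uniform transversality along the horizontal leaf $\mathcal{H}(y_0)=\mathcal{H}(p_0)$, where $p_0\in \mathcal{V}(x)$ witnesses $y_0\in \mathcal{A}(x)$: this produces a holonomy diffeomorphism between a disk in $\mathcal{V}(x)$ around $p_0$ and a disk in $\mathcal{V}(y_0)$ around $y_0$ that preserves horizontal leaves, so that a neighborhood of $y_0$ in $\mathcal{V}(y_0)$ sits inside $\mathcal{A}(x)$. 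Iterating at every point of the horizontal plaque of $y_0$, and using compactness of $M$ to ensure uniformity of the resulting transverse disks, should cover the chart $U$ by vertical plaques all contained in $\mathcal{A}(x)$, placing $y\in \mathcal{A}(x)$.

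The main obstacle is securing a uniform lower bound on the sizes of these holonomy disks as one varies the base point along the horizontal plaque of $y_0$; compactness of $M$ together with the continuity of the holonomy along a fixed leaf should provide this. Once $\mathcal{A}(x)$ is known to be clopen, the connectedness of $M$ immediately yields $\mathcal{A}(x)=M$.
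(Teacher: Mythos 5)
Your first two steps (openness, and using $\mathcal{A}(x)=\mathcal{C}(x)$ together with $\mathcal{H}$- and $\mathcal{V}$-saturation to conclude that a boundary point $y$ lies in $\mathcal{V}(\mathcal{H}(x))$) are fine and parallel the beginning of the paper's argument. The gap is in the step you yourself flag as the ``hard part'': the claimed uniform lower bound on the size of the holonomy disks. Theorem \ref{Thm: transunif} only produces \emph{some} disks $D_1, D_2$, with no control on their size, and as the base point ranges over the horizontal plaque of $y_0$ the relevant horizontal leaves and the leafwise paths joining them to $\mathcal{V}(x)$ change; these paths can be arbitrarily long, and holonomy along long leafwise paths can contract vertical transversals arbitrarily (this is exactly what happens as one approaches $\partial\mathcal{A}(x)$: nearby attainable points may be reached from $\mathcal{V}(x)$ only through longer and longer horizontal excursions). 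Compactness of $M$ and ``continuity of holonomy along a fixed leaf'' do not give uniformity here, because the leaf is not fixed and the paths are unbounded; so the covering of the chart $U$ by vertical plaques inside $\mathcal{A}(x)$ is not established, and $y\in\mathcal{A}(x)$ does not follow.

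A structural warning sign is that this final step uses neither the hypothesis $\mathcal{A}(x)=\mathcal{C}(x)$ nor the fact $y\in\mathcal{V}(\mathcal{H}(x))$ you derived: if the holonomy-uniformity argument were valid it would prove that $\mathcal{A}(x)$ is always clopen on a compact connected manifold, making the hypothesis superfluous, whereas the paper's subsequent discussion exhibits situations with $\mathcal{A}(z)\neq M$. The paper instead uses the point $w\in\mathcal{V}(y)\cap\mathcal{H}(x)$ (which exists precisely by the hypothesis) in an essential way: it takes a horizontal curve $\gamma$ from $x$ to $w$ and shows, using that $\partial\mathcal{A}(x)$ is $\mathcal{H}$-saturated and $M$ is compact, that either every vertical leaf through $\gamma$ meets $\partial\mathcal{A}(x)$ or none does; since $\mathcal{V}(w)=\mathcal{V}(y)$ meets the boundary, so would $\mathcal{V}(x)$, which is impossible because $\mathcal{V}(x)$ lies in the open set $\mathcal{A}(x)$. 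That transport of the property ``the vertical leaf meets $\partial\mathcal{A}(x)$'' along $\gamma$ is what replaces the uniform holonomy bound you would need; to repair your proof you should use $w$ and an argument of this type rather than trying to bound disk sizes by compactness.
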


\begin{proof}
Consider $y\in \partial\mathcal{A}(x)$.
Then $\mathcal{V}(y)\cap \mathcal{A}(x)\not=\emptyset,$
since $\partial \mathcal{A}(x)$ is an $\mathcal{H}$-saturated set.
Therefore, there exists a  $z\in \mathcal{V}(y)\cap \mathcal{A}(x)$.
On the other hand, as  $z\in \mathcal{C}(x)$,
we have that $\mathcal{V}(y)\cap \mathcal{H}(x)\not=\emptyset$.
Now let $\gamma$ be a horizontal curve starting at $x$ and ending at
the point $w\in\mathcal{V}(y)\cap \mathcal{H}(x)$.

Using the fact that $\partial\mathcal{A}(x)$ is $\mathcal{H}$-saturated
and that $M$ is compact we have
 that either all vertical leaves of $\mathcal{V}(\gamma)$
intersect $\partial\mathcal{A}(x)$ or none of them intersects
$\partial\mathcal{A}(x)$.
But we already know that $\mathcal{V}(w)=\mathcal{V}(y)$ intersects
$\partial\mathcal{A}(x)$, therefore so does $\mathcal{V}(x)$.
This implies that $\partial\mathcal{A}(x)\subset \mathcal{A}(x)$ and then
$\mathcal{A}(x)=M$, since  $M$ is connected.

\end{proof}

The converse of the proposition above obviously  does not hold. Also, if
$\mathcal{A}(x)=M$ it does not imply that $\mathcal{A}(z)=M$ for all $z\in M$.
As a simple example, in Figure 2, $\mathcal{A}(x)=M$ but
$y\notin \mathcal{A}(z)$.
\begin{figure}[htb!]\label{L}
\centering
\includegraphics[scale=0.2]{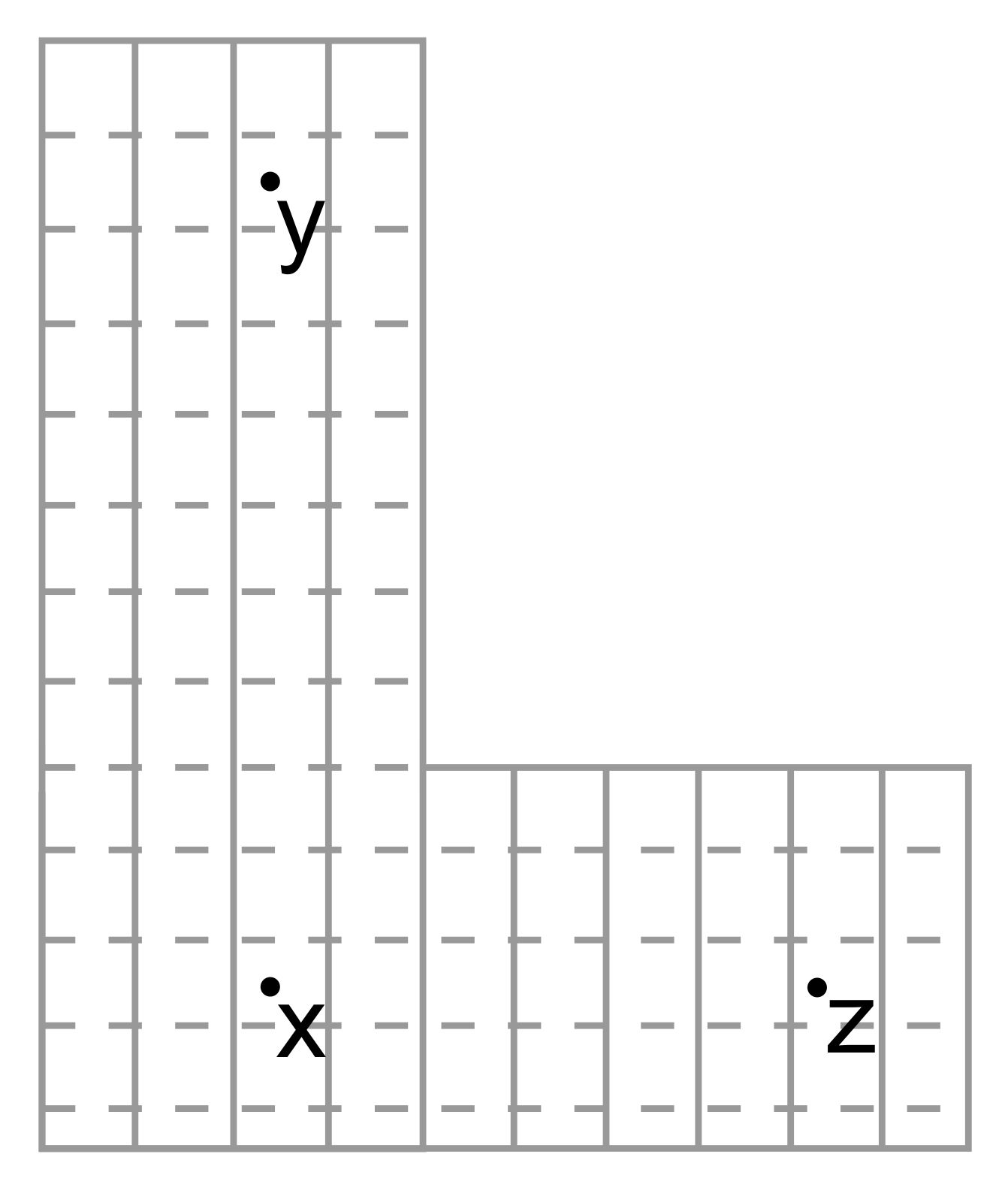}
\caption{ $\mathcal{A}(x) \neq \mathcal{A}(z)$.}
\end{figure}

Naturally, 
if a diffeormorfism $\phi$ is such 
that for some $x\in M$, we have that  $\phi(x)\not\in \mathcal{A}(x)$ then 
$\phi$ 
is not decomposable in a neighbourhood of $x$.

 \subsection{Dynamical  obstruction}

Next Proposition contains properties of the dynamics of the leaves along the
components of a decomposable flow. These simple properties explain
topologically the nondecomposability of some diffeomorphisms.

\begin{proposition}  \label{Prop: intersecao}
Suppose that $\varphi_t$ has the
decomposition $\varphi_t=\xi_t \circ \psi_t$ up to a stopping time $\tau>0$
a.s.. Then, for $0\leq t<\tau$,  $\psi_t(x)\in
\mathcal{V}(x) \cap \mathcal{H}(\varphi_t(x)) $ and $\xi_t(y)\in
\mathcal{H}(y) \cap \varphi_t(\mathcal{V}(y)) $ where $x$ and $y$ are in
the appropriate domain.

\end{proposition}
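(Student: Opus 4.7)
The plan is to unpack the definitions of the subgroups $\difH$ and $\difV$ and then read off both inclusions directly. By construction, any $\psi \in \difV$ is generated by vertical vector fields and therefore fixes each vertical leaf setwise; symmetrically, any $\xi \in \difH$ fixes each horizontal leaf setwise. These two leaf-preservation properties are essentially all that one needs.

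For the first inclusion, I would first use $\psi_t \in \difV$ to obtain $\psi_t(x) \in \mathcal{V}(x)$. Then I would rewrite the decomposition as $\varphi_t(x) = \xi_t(\psi_t(x))$ and apply the fact that $\xi_t \in \difH$ preserves the horizontal leaf through $\psi_t(x)$. Hence $\psi_t(x)$ and $\xi_t(\psi_t(x)) = \varphi_t(x)$ lie on the same horizontal leaf, giving $\psi_t(x) \in \mathcal{H}(\varphi_t(x))$. Intersecting the two memberships yields the first claim.

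For the second inclusion, I would invert the decomposition to get $\xi_t(y) = \varphi_t(\psi_t^{-1}(y))$ on the appropriate domain. Since $\difV$ is a subgroup, $\psi_t^{-1} \in \difV$, and therefore $\psi_t^{-1}(y) \in \mathcal{V}(y)$, from which $\xi_t(y) \in \varphi_t(\mathcal{V}(y))$. Simultaneously, $\xi_t \in \difH$ gives directly $\xi_t(y) \in \mathcal{H}(y)$. Taking the intersection finishes the argument.

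The only point that requires a small amount of care, rather than being an obstacle, is ensuring one is working within the domain of definition on which the decomposition genuinely exists. Since $\xi_t$ and $\psi_t$ are assumed to be (local) diffeomorphisms for $0 \le t < \tau$, the inverse $\psi_t^{-1}$ is well-defined on the relevant neighbourhood, and all the identities above make sense. No analytical machinery or topological input on the foliations is required beyond the defining property of each subgroup.
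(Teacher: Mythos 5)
Your argument is correct and is essentially the paper's own proof: both inclusions are read off from the facts that $\psi_t$ (and hence $\psi_t^{-1}$) preserves vertical leaves while $\xi_t$ preserves horizontal leaves, applied to $\varphi_t=\xi_t\circ\psi_t$. Your write-up merely makes the paper's terse second step explicit via $\xi_t=\varphi_t\circ\psi_t^{-1}$, which is exactly the intended reasoning.
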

\begin{proof}
 The first statement follows easily from the fact that $\psi_t(x) \in
\mathcal{V}(x)$ and $\xi_t$ preserves the horizontal leaves.
 The second statement follows obviously from the fact that
$\xi_t(y)\in \mathcal{H}(y)$ and $y\in \mathcal{V}(y)$.

\end{proof}

Consider the plane $\R^2$ endowed with the usual vertical and horizontal foliations.
A rotation by $\pi/2$  does not satisfy the second property of Proposition \ref{Prop: 
intersecao}.  Hence dynamical obstruction is independent of attainability
since $\mathcal{A}(p)= \R^2$ for all $p\in \R^2$.

\subsection{Preserving transverse orientation }

Suppose that the horizontal foliation is transversely orientable and fix a 
biregular atlas with positive transverse orientation for the horizontal
foliation. Given  a continuous family $\phi_t$ of (global) diffeomorphisms on 
$M$ with
$\phi_0=1d$ and a point  $p\in M$, consider two  local
coordinate systems in this atlas:
 $\eta_p: U_p\subset M \rightarrow  \R^{n-k} \times \R^{k}$, in a
neighbourhood of $p$ and  $\eta_{\phi_t(p)}: U_{\phi_t(p)} \subset M
\rightarrow  \R^{n-k} \times \R^{k}$,
in a neighbourhood of $\phi_t(p)$.
With respect to these coordinate systems one writes the family of
diffeomorphisms locally as $\phi_t = (\phi_t^1 (x,y), \phi_t^2
(x,y))$, such that
\begin{equation} \label{Eq: subdeterminant}
\dete{n-k+1,\ldots,n}{n-k+1,...,n} D\phi_t(x) =   \det
\frac{\partial \phi_{t}^2(x,y)}{\partial y},
\end{equation}
where the determinant above is taken for the  $k\times k$ submatrices defined 
by 
the intersections of the columns indicated by subscripts and rows indicated 
by 
the superscripts.
Distinct choices of coordinate systems in this atlas does not change the sign
of the determinant above. Globally we have the following results whose main
application is when $\phi_t$ is a flow of diffeomorphisms.

\begin{theorem}\label{Thm: globdec} Suppose that the foliations $(M,
\mathcal{H}, \mathcal{V})$ is transversely
orientable for the horizontal
foliation.
Let $(\phi_t)_{t\in [0,a)}$ be a family
of
diffeomorphisms on $M$ depending continuously on $t$, with $\phi_0=1d$.
The family of diffeomorphisms $\phi_t$ is globally decomposable for all  $t\in 
[0,a)$ if and only if it preserves the transverse orientation,
i.e.
\[
\dete{n-k+1,...,n}{n-k+1,...,n} D\phi_t(x)> 0,
\]
for any $x\in M$ and for all $t\in [0,a)$.
\end{theorem}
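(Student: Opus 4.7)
My plan is to treat the two implications separately, since the forward direction is essentially a coordinate computation while the reverse direction requires a global construction of $\psi_t$.

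For the only-if direction, assume $\phi_t = \xi_t \circ \psi_t$ is a global decomposition. In any biregular chart with coordinates $(x,y) \in \R^{n-k} \times \R^{k}$ around $p$ and around $\phi_t(p)$, invariance of the vertical leaves under $\psi_t$ forces $\psi_t(x,y) = (x, \psi_t^2(x,y))$, and invariance of the horizontal leaves under $\xi_t$ forces $\xi_t(x,y) = (\xi_t^1(x,y), y)$. Composing gives $\phi_t^2 = \psi_t^2$, so the subdeterminant in (\ref{Eq: subdeterminant}) coincides with $\det(\partial \psi_t^2 /\partial y)$. This is nonzero since $\psi_t$ is a local diffeomorphism, equals $1$ at $t=0$, and depends continuously on $t$ by the continuity assertion of Proposition \ref{Prop: analyt local obstruct decomp}; hence it stays strictly positive on $[0,a)$.

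For the if direction, assume the subdeterminant is positive throughout $M \times [0,a)$. The plan is to construct $\psi_t$ pointwise by a continuous lift in time. Fix $p \in M$ and $t \in [0,a)$. At each $s \in [0,t]$, positivity of the subdeterminant together with Proposition \ref{Prop: analyt local obstruct decomp} yields a local decomposition of $\phi_s$ in a neighbourhood of $p$, and hence a locally unique point $\psi_s(p) \in \mathcal{V}(p) \cap \mathcal{H}(\phi_s(p))$ via the implicit function theorem. Starting from $\psi_0(p) = p$ and using the continuity statement of Proposition \ref{Prop: analyt local obstruct decomp} plus compactness of $[0,t]$, I would cover the path $s \mapsto \phi_s(p)$ by finitely many biregular windows in which the local decomposition is uniformly valid, and stitch the resulting local pieces into a continuous curve $s \mapsto \psi_s(p)$ on $[0,t]$; local uniqueness in Proposition \ref{Prop: analyt local obstruct decomp} makes the stitching unambiguous. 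Setting $\psi_t(p)$ to be the endpoint defines a map $\psi_t : M \to M$, and one puts $\xi_t := \phi_t \circ \psi_t^{-1}$. By construction $\psi_t$ sends each vertical leaf into itself, so $\psi_t \in \difV$, and $\xi_t(\psi_t(p)) = \phi_t(p) \in \mathcal{H}(\psi_t(p))$ shows $\xi_t \in \difH$.

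The main obstacle, and where transverse orientability plays its essential role, is promoting these pointwise lifts to a genuine global diffeomorphism. Smoothness and the local diffeomorphism property of $\psi_t$ follow from the implicit function theorem applied to $(x,y) \mapsto (x, \phi_t^2(x,y))$ in biregular charts, using positivity of the subdeterminant. Injectivity follows because $\psi_t(p) = \psi_t(p')$ would give $\phi_t(p) = \xi_t(\psi_t(p)) = \xi_t(\psi_t(p')) = \phi_t(p')$, contradicting that $\phi_t$ is a diffeomorphism; surjectivity then follows from $\psi_t$ being an open map (local diffeomorphism) whose image is closed by compactness of $M$, so $\psi_t(M) = M$ on each connected component. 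The subtle global point is that if the subdeterminant were only nonvanishing and could change sign between charts, the intersection $\mathcal{V}(p) \cap \mathcal{H}(\phi_s(p))$ could be tracked along different local branches on overlapping charts and the pointwise candidates would fail to match on overlaps. Uniform positivity rules out this monodromy and yields a global decomposition $\phi_t = \xi_t \circ \psi_t$ for every $t \in [0,a)$.
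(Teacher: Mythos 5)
Your forward direction is fine (this is the part the paper dismisses as trivial), and your reverse direction follows the same general strategy as the paper: build the decomposition by continuation in time, using the local statement of Proposition \ref{Prop: analyt local obstruct decomp} and its uniqueness to stitch pieces together. But the write-up skips over exactly the two points where the paper's proof does its real work, and as written these are genuine gaps. First, Proposition \ref{Prop: analyt local obstruct decomp} is a statement about a map between open subsets of $\R^n$ with the standard product foliation; when $p$, the intermediate point $\psi_s(p)$ and $\phi_s(p)$ lie in three different charts of the biregular atlas, the chartwise decomposition does not by itself produce a point of $\mathcal{V}(p)\cap\mathcal{H}(\phi_s(p))$, nor maps with $\psi_s(q)\in\mathcal{V}(q)$ and $\xi_s$ preserving $\mathcal{H}$: the chartwise intermediate point mixes the $x$-coordinate of the source chart with the $y$-coordinate of the target chart and has no intrinsic meaning on $M$ unless the three charts are \emph{coherent} (equal $x$-coordinates along vertical leaves between the chart at $p$ and at $\psi_s(p)$, equal $y$-coordinates along horizontal leaves between the chart at $\psi_s(p)$ and at $\phi_s(p)$). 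The paper manufactures such coherent charts from the decomposition already obtained at slightly earlier times, via $\eta_p=\eta_{\psi(p)}\circ\psi$ and $\eta_{\phi(p)}=\eta_{\psi(p)}\circ\xi^{-1}$; you never construct them, and without them the claim that ``local uniqueness makes the stitching unambiguous'' is unsupported -- uniqueness in Proposition \ref{Prop: analyt local obstruct decomp} is uniqueness within a fixed chart pair, and different chart pairs along the time interval could select different branches of the (possibly empty, possibly multi-point) leaf intersection. Note also that the leaf intersection itself is not guaranteed nonempty by positivity of the subdeterminant alone at a fixed time; it is precisely the continuation from $t=0$ through coherent charts that produces it.

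Second, covering only the path $s\mapsto\phi_s(p)$ by ``biregular windows'' is not enough: the intermediate point $\psi_s(p)$ wanders along the vertical leaf $\mathcal{V}(p)$ and can exit any fixed chart around $p$ or around $\psi_{s'}(p)$, so one must also pave its trajectory and keep the three-chart coherence as time advances. This is where the paper invokes Theorem \ref{Thm: transunif} (uniform transversality) to enlarge $U_{\psi(p)}$ and $U_{\phi(p)}$ vertically, keeping the charts coherent, so that the decomposition valid up to $t_0$ extends to $t_0+\delta$; your proposal has no substitute for this step, and the ``uniformly valid windows'' assertion is exactly what needs proof. A smaller but real defect: your injectivity argument for $\psi_t$ is circular, since it evaluates $\xi_t=\phi_t\circ\psi_t^{-1}$ at $\psi_t(p)=\psi_t(p')$, but $\xi_t$ is only well defined once one already knows $\psi_t(p)=\psi_t(p')$ implies $\phi_t(p)=\phi_t(p')$, which is the statement being proved. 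So the skeleton of your argument is the paper's, but the coherent-chart construction and the vertical extension via uniform transversality -- the ingredients that make Proposition \ref{Prop: analyt local obstruct decomp} applicable and the gluing unambiguous -- are missing.
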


\begin{proof} The idea of the proof is to show that if the global decomposition 
exists up to a time $t_0 <a$, then the decomposition can be extended to $t_0 
+ \delta$, with a positive $\delta$. This construction involves changing of 
local charts which are horizontal and vertically compatible with each other. 
These coherent local charts are paving the next $\delta$-time 
future trajectories as the system evolves.

For each $p\in M$, take a bounded open set $U_p$ which 
belongs to
the
biregular atlas. Define  $t_0=\sup\{t\in [0,a):
\phi_t|_{U_p} \mbox{ is decomposable as } \phi_t=\xi_t \circ \psi_t \}$. 
Suppose that $t_0<a$. Initially we prove that $\phi_{t_0}$ is 
locally decomposable. Since $p$ and $\phi_{t_0}(p)$ might be in distinct local 
chart, Proposition \ref{Prop: analyt local obstruct decomp} does not apply 
directly.

For each $t < t_0$, we claim that there exist three coordinate
systems $((x_{\alpha},y_{\alpha}),U_{\alpha})$, $\alpha=1,2$ or 3, covering 
$p$, $\psi_t(p)$ and
$\phi_t (x)$, respectively, defined on open sets $U_{p}, U_{\psi(p)}$ and 
$U_{\phi(p)}$ which are {\it coherent} in $\R^n$ in the sense that, distinct
points in the same horizontal leaf  have the same $y_{\alpha}$ coordinate;
analogously to the vertical leaves. Precisely (omitting the subscript $t$):


\begin{itemize}
\item[i)] If $u\in U_{p}$ and $v\in U_{\psi(p)}$ with
$\mathcal{V}(u)=\mathcal{V}(v)$ then $x_p(u)=x_{\psi(p)}(v);$
\item[ii)]  If $u\in U_{\psi(p)}$ e $v\in U_{\phi(p)}$ with
$\mathcal{H}(u)=\mathcal{H}(v)$ then $y_{\psi(p)}(u)=y_{\phi(p)}(v)$.
\end{itemize}
In fact, using that there exists a biregular coordinate system
$\eta_{\psi(p)}: U_{\psi(p)} \rightarrow \R^n$ in a neighbourhood of $\psi(p)$,
consider  $ \eta_{p}=\eta_{\psi(p)}\circ
\psi_{p}$ and $ \eta_{\phi(p)}=\eta_{\psi(p)}\circ\xi^{-1}$. With this choice of
local coordinates, properties  (i) and (ii) above follow directly from the fact
that $\psi$ preserves leaves of $\mathcal{V}$ and $\xi$ preserves  leaves of
$\mathcal{H}$.

 By continuity of $\phi_t$ we have that
for each $q\in U_p$ there exists an $\epsilon>0$, sufficiently 
small,
such that $\phi_{t_0}(q)$ is attainable from $U_p$ at time 
$(t_0-\epsilon)$, i.e.
$\phi_{t_0}(q)\in \phi_{t_0- \epsilon}(U_p)$.
Consider the local diffeomorphism on $\R^n$ given by
$$
\tilde{\phi}_q := \eta_{\phi(p)}\circ \phi_{t_0}\circ\eta^{-1}_p,
$$
where the coherent coordinate systems $\eta_{\phi(p)}$ and  $\eta_p$ above
refer to time $(t_0-\epsilon)$. By hypothesis
$$
\dete{n-k+1,...,n}{n-k+1,...,n}(D\tilde{\phi}_q)>0.
$$
Therefore, by Proposition \ref{Prop: analyt local obstruct decomp},
$\tilde{\phi}_q$ has a unique decomposition
$\tilde{\phi}_q=\tilde{\xi}_q\circ \tilde{\psi}_q$ in $\R^{n-k} \times \R^k$
such that the component
$\tilde{\xi}_q$ preserves vertical coordinates and $\tilde{\psi}_q$
preserves horizontal coordinates. Take
$$
\psi_q=\eta_{\psi(p)}^{-1}\circ\tilde{\psi}_q\circ
\eta_{p} \ \ \ \mbox{ and }\ \ \
\xi_q=\eta_{\phi(p)}^{-1}\circ\tilde{\xi}_q\circ \eta_{\psi(p)}.
$$
The uniqueness of the local decomposition and the coherency of the
coordinate systems allow us to glue all the
$\psi_q$, $q\in U_p$,
together into a diffeomorphism $\psi_p$,
defined all over $U_p$. Analogously, we construct the
horizontal
component $\xi_p$ defined all over $U_{\psi(p)}$. Then $\phi_{t_0}$ is 
decomposable
in the neighbourhood $U_p$ of $p$, for all $p\in M$.

Now, suppose by contradiction that $t_0< a$. We are going to
show that there exist a sufficiently small
$\delta>0$ such that $\phi_{t_0+\delta}$ is also decomposable. This 
finishes the proof, since the converse is trivial. Since
$U_{\psi(p)}$  is birregular (here the subscript
$t_0$ is omitted), it can be restricted such that its image by $\eta_{\psi(p)}$
is an $n$-dimensional rectangle whose  border $\partial{U_{\psi(p)}}$ of
$U_{\psi(p)}$
 is the union of purely horizontal components $\partial^{h}U_{\psi(p)}$
and  purely vertical components $\partial^{v}U_{\psi(p)}$.
In this case, the trajectory of $\psi_t(q)$, along the time $t$,  leaves
$U_p$,
gets in and out of $U_{\psi(p)}$ only by crossing their horizontal borders.

By Theorem \ref{Thm: transunif}, the set $U_{\psi(p)}$ can be extended 
vertically to
$U'_{\psi(p)}$ (extending vertically in the neighbourhood of each point of
the compact $ \partial^h U_{\psi(p)}$). For the set $U_{\phi(p)}$, each
point in the border $\partial U_{\phi(p)}$ has either a horizontal or a
vertical leaf crossing the border $\partial U_{\phi(p)}$ at this point. Then,
again, by Theorem \ref{Thm: transunif}, this set can be enlarged to an open set
$U'_{\phi(p)}$ which contains the original one in its interior. Hence, the 
coordinate charts in the sets
$U'_{\psi(p)}$ and $U'_{\phi(p)}$ are still coherent.
So that, for $\delta>0$ sufficiently small, $\phi_{t_0+\delta}(U_p)\subset
U'_{\phi(p)}$. This allows the decomposition to be performed at $t_0+ 
\delta$. We conclude that $t_0=a$.

Finally, the global result is obtained using that at any point $p\in M$, the 
decomposition of 
$\phi_t|_{U_p}$, in a neighbourhood $U_p$ of $p$ holds for all $t\in [0,a)$. 
Using the uniqueness of local
decomposition (Proposition \ref{Prop: analyt local obstruct decomp}) in the 
intersections of the neighbourhoods $U_p$, for all $p\in M$, we obtain the 
global decomposition for all $t\in [0,a)$.

 \end{proof}
Transverse orientation in the proof above is crucial to 
guarantee that the sign of
$\det_{n-k+1,...,n}^{n-k+1,...,n}D \phi_t$ is  globally defined.

\begin{corollary} Suppose that the foliations $(M,
\mathcal{H}, \mathcal{V})$ is transversely
orientable for the horizontal
foliation.  Given $x \in M$, if $\varphi_t(x)$ approaches the boundary $\partial
\mathcal{A}(x)$ of the attainable set, then the subdeterminant goes to zero.
\end{corollary}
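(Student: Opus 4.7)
The plan is to proceed by contradiction using local decomposability. I assume that $\varphi_t(x)\to y\in\partial\mathcal{A}(x)$ as $t\to t^*$ while the subdeterminant
\[
 D_x(t)\;:=\;\dete{n-k+1,\ldots,n}{n-k+1,\ldots,n} D\varphi_t(x),
\]
whose sign is globally well-defined by transverse orientability, does not tend to zero. Continuity of $t\mapsto D\varphi_t$ then forces $D_x(t^*)\neq 0$, after passing to a subsequence if necessary.

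The next step is to invoke Proposition \ref{Prop: analyt local obstruct decomp} in coherent biregular charts around $x$ and $y$, built along the same lines as in the proof of Theorem \ref{Thm: globdec} (pulling back charts via the local decompositions $\psi_t,\xi_t$ that exist while $\varphi_t(x)\in\mathcal{A}(x)$, for $t$ slightly less than $t^*$). With $D_x(t^*)\neq 0$ this produces a unique local factorization $\varphi_{t^*}=\xi_{t^*}\circ\psi_{t^*}$ on a neighborhood of $x$, where $\psi_{t^*}$ preserves vertical leaves and $\xi_{t^*}$ preserves horizontal leaves. Proposition \ref{Prop: intersecao} then yields $\psi_{t^*}(x)\in\mathcal{V}(x)$ and hence
\[
 \varphi_{t^*}(x)=\xi_{t^*}\bigl(\psi_{t^*}(x)\bigr)\in\mathcal{H}\bigl(\psi_{t^*}(x)\bigr)\subset\mathcal{H}(\mathcal{V}(x))=\mathcal{A}(x),
\]
which contradicts $\varphi_{t^*}(x)=y\in\partial\mathcal{A}(x)$, since $\mathcal{A}(x)$ is open (by transversality of $\mathcal{H}$ and $\mathcal{V}$) and so disjoint from its boundary. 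Therefore $D_x(t^*)=0$, and by continuity $D_x(t)\to 0$ as $t\to t^*$.

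The hard part will be rigorously producing the coherent charts at the limit time $t^*$: in Theorem \ref{Thm: globdec} the charts are bootstrapped from an already-existing decomposition at a slightly earlier time, but at $t^*$ one cannot presume such a decomposition at $y$. I would handle this by letting $t^*$ be the first hitting time of $\partial\mathcal{A}(x)$, so that for every $t<t^*$ the point $\varphi_t(x)$ lies in the open set $\mathcal{A}(x)$ and coherent charts can be constructed along the trajectory. Uniform transversality (Theorem \ref{Thm: transunif}) then allows these charts to be propagated to a neighborhood of $y$, providing the setup required for the contradiction above.
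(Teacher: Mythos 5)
Your overall architecture is the same as the paper's: local decomposability near $x$ forces $\varphi_t(x)=\xi_t(\psi_t(x))\in\mathcal{H}(\mathcal{V}(x))=\mathcal{A}(x)$, which is open, so a point on $\partial\mathcal{A}(x)$ cannot be reached by a decomposable diffeomorphism, and Theorem \ref{Thm: globdec} is what converts a nonvanishing subdeterminant into decomposability. The genuine gap is in your treatment of the ``hard part''. You justify the existence of the local decompositions $\psi_t,\xi_t$ (and hence of the coherent charts obtained by pulling back through them) for $t$ slightly less than $t^*$ by the fact that $\varphi_t(x)\in\mathcal{A}(x)$ for $t<t^*$. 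Attainability is only a necessary condition for decomposability, never a sufficient one: the paper's own example in Section 2.2 (rotation of $\R^2$ by $\pi/2$ with the Cartesian foliations) has $\mathcal{A}(p)=\R^2$ for every $p$, yet the decomposition fails at $t=\pi/2$ because the subdeterminant vanishes. So the step ``for every $t<t^*$ the point lies in the open set $\mathcal{A}(x)$ and coherent charts can be constructed along the trajectory'' does not follow from what you have assumed.

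What the bootstrap in the proof of Theorem \ref{Thm: globdec} actually consumes is positivity of the subdeterminant on the whole interval up to the current time: the coherent charts at time $t$ are defined by conjugating with a decomposition that already exists at time $t$, and that decomposition is produced inductively from $\phi_0=Id$ under the standing positivity hypothesis. Your contradiction hypothesis only gives $D_x(t)\neq 0$ for $t$ at and near $t^*$; if the subdeterminant vanished at some earlier $t_1<t^*$, the chain of coherent charts breaks and your construction yields no factorization of $\varphi_{t^*}$ near $x$, so the contradiction is never reached. The repair is to argue as the paper does: suppose the subdeterminant stays positive (equivalently, never vanishes, by continuity and $D_x(0)=1$, with the sign globally defined by transverse orientability) on all of $[0,t^*]$; then Theorem \ref{Thm: globdec} (or its localized form along the trajectory of $x$) gives decomposability of $\varphi_{t^*}$ near $x$, hence $\varphi_{t^*}(x)\in\mathcal{A}(x)$, contradicting $\varphi_{t^*}(x)\in\partial\mathcal{A}(x)$; therefore the subdeterminant must vanish by the hitting time. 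With the contradiction hypothesis strengthened in this way, the remainder of your argument (openness of $\mathcal{A}(x)$, Proposition \ref{Prop: intersecao}) is fine and coincides with the paper's proof, which simply cites Theorem \ref{Thm: globdec} at this point instead of rebuilding its chart machinery.
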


\begin{proof} Suppose that there exists $x\in M$ and $t_0>0$ such
that $\varphi_{t_0}(x)\in \partial \mathcal{A}(x)$.  Then $\phi_{t_0}$ is not
decomposable. Therefore
$\mbox{det}^{n-k+1,...,n}_{n-k+1,...,n}(D\phi_{t_0}(x))=0$ by Theorem
\ref{Thm: globdec}.

\end{proof}
The foliations in figure 1 illustrates the result stated in this corollary. The 
horizontal foliation is transversely orientable. Hence, any flow which carries 
an $x_0$ to the boundary $\partial \mathcal{A}(x_0)$ has the subdeterminant 
going to zero as it approaches $\partial \mathcal{A}(x_0)$.

\begin{example} \label{Ex: non-orientable}  Theorem above states 
conditions for  
decomposability in transversely orientable horizontal foliation.  
Consider the quotient manifold $M=[0,1]^3/\hspace{-2mm}\sim $ where the 
projection is 
taken under the identification of two faces of the cube:
\end{example}
\[
 (x,0,z) \sim (1-x, 1, 1-z)
\]
such that the leave $(x,y,1/2)$ turns into a M\"obius 
strip $S$. Hence $M$ is a tubular neighbourhood of this M\"obius strip with 
horizontal foliation $\mathcal{H}$ given by the image of the horizontal plaques 
and vertical foliation $\mathcal{V}$ is the corresponding image of the vertical 
lines.
Although $(M, \mathcal{H})$ is not transversely orientable, the 
connected foliated space $(M\setminus S, \mathcal{H})$ is 
transversely orientable. 

Consider the complete flow given by the projection of $\varphi_t (x,y,z)= 
(x,y+t,z)$. With respect to this pair of foliation, $\varphi_t$ is 
a horizontal flow, hence we have a trivial decomposition 
$\varphi_t=\xi_t \circ \psi_t$ given by $\xi_t=\varphi_t$ and $\psi_t \equiv 
Id$ for small $t$. When we consider the non-transversely orientable 
foliation $(M, \mathcal{H})$, for a local biregular chart in a neighbourhood 
of an initial condition 
$x_0 \in S$, just before times $t \in \{(2k+1) 2\pi, \mbox{ with } k\in \Z\}$ 
the decomposition does not exists as continuous trajectories in the subgroups 
of diffeomorphisms: 
since $\varphi_t$ reverses simultaneously the orientation of both the vertical 
and horizontal leaves passing through $x_0$,  before any of these times, the 
decomposition  $\varphi_t= \xi_t 
\circ \psi_t$ breaks continuity: it is given by the projection of the 
two reverting orientation diffeomorphisms $\xi_t(x,y,z)= 
(y,x,z)$ and $\psi_t(x,y,z)= (x,y,1-z)$. However, in the manifold $(M\setminus 
S, 
\mathcal{H})$ the decomposition is guaranteed by Theorem \ref{Thm: globdec} for 
all $t\geq 0$.


\section{Linear algebra of subdeterminants}

In this section, we
obtain an It\^o formula for the subdeterminants in expression (\ref{Eq:
subdeterminant}) for  the linearised solutions of Stratonovich SDE. With this 
formula we obtain sufficient conditions for the
decomposition of stochastic flows for all $t\geq 0$. Initially, we present some basic results on
linear algebra, including the well-known Cauchy-Binet formula for the
determinant of a product of matrices.

\begin{lemma} Let $A, B: \R^{n-k} \oplus \R^k  \rightarrow \R^{n-k} \oplus \R^k 
$ be
linear
isomorphisms which are block diagonals preserving the subspaces $\R^{n-k}\times \{0\}$ and 
$\{0\}
\times \R^k$. Then, for any linear operator $Y$ in $\R^{n}$, the
subdeterminant
\[
 \dete{n-k+1,...,n}{n-k+1,...,n}(A\cdot Y \cdot B^{-1})  = 
 \left[\dete{n-k+1,...,n}{n-k+1,...,n}
(Y) \right] \left[\dete{n-k+1,...,n}{n-k+1,...,n}
(A \cdot B^{-1})\right].
\]
\end{lemma}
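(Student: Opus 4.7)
The plan is to prove this by direct block-matrix computation, exploiting the hypothesis that $A$ and $B$ are block diagonal in the decomposition $\mathbb{R}^n = \mathbb{R}^{n-k} \oplus \mathbb{R}^k$. Since the subdeterminant $\dete{n-k+1,\ldots,n}{n-k+1,\ldots,n}$ is, by definition, the ordinary determinant of the bottom-right $k \times k$ block, everything should reduce to ordinary determinant identities on $k \times k$ matrices.

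First I would write
\[
A = \begin{pmatrix} A_1 & 0 \\ 0 & A_2 \end{pmatrix}, \qquad
B = \begin{pmatrix} B_1 & 0 \\ 0 & B_2 \end{pmatrix}, \qquad
Y = \begin{pmatrix} Y_{11} & Y_{12} \\ Y_{21} & Y_{22} \end{pmatrix},
\]
where $A_1, B_1$ act on $\mathbb{R}^{n-k}$ and $A_2, B_2$ act on $\mathbb{R}^k$; the block-diagonal form of $A$ and $B$ is exactly the hypothesis that they preserve both $\mathbb{R}^{n-k} \times \{0\}$ and $\{0\} \times \mathbb{R}^k$. Then $B^{-1}$ is again block diagonal with blocks $B_1^{-1}$ and $B_2^{-1}$, and a direct multiplication gives
\[
A \cdot Y \cdot B^{-1} = \begin{pmatrix} A_1 Y_{11} B_1^{-1} & A_1 Y_{12} B_2^{-1} \\ A_2 Y_{21} B_1^{-1} & A_2 Y_{22} B_2^{-1} \end{pmatrix}.
\]

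Next I would simply read off the bottom-right $k \times k$ block, namely $A_2 Y_{22} B_2^{-1}$, so that by multiplicativity of the ordinary $k \times k$ determinant,
\[
\dete{n-k+1,\ldots,n}{n-k+1,\ldots,n}(A \cdot Y \cdot B^{-1}) = \det(A_2) \det(Y_{22}) \det(B_2^{-1}) = \det(Y_{22}) \det(A_2 B_2^{-1}).
\]
On the other hand, $\dete{n-k+1,\ldots,n}{n-k+1,\ldots,n}(Y) = \det(Y_{22})$, and because $A \cdot B^{-1}$ is block diagonal with bottom-right block $A_2 B_2^{-1}$, the factor $\det(A_2 B_2^{-1})$ equals $\dete{n-k+1,\ldots,n}{n-k+1,\ldots,n}(A \cdot B^{-1})$. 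Comparing the two expressions yields the claimed identity.

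There is no real obstacle here; the only point that deserves a sentence of care is verifying that the off-diagonal blocks of $AYB^{-1}$ play no role, which is where the block-diagonal hypothesis on $A$ and $B$ is used decisively. (One could alternatively invoke Cauchy--Binet to separate factors, but since the block-diagonal assumption makes the cross terms vanish, the direct computation above is strictly shorter and avoids summing over multi-indices.)
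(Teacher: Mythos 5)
Your computation is correct and is exactly the argument the paper intends: its proof is simply ``Straightforward using block matrices,'' and your explicit block multiplication, reading off the lower-right block $A_2 Y_{22} B_2^{-1}$ and using multiplicativity of the $k\times k$ determinant, fills in precisely those details.
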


\begin{proof} Straightforward using block matrices. 

\end{proof}
This lemma guarantees that the concept of preserving transverse orientation 
diffeomorfism
is well defined in terms of  subdeterminants since it is independent of the 
local 
coordinate sistem in a transversely oriented atlas. 

In general, given a $k\times n$ real matrix $C$, with $k,n \in \N$, we denote by
$C_{j_1...j_{r}}$ the $k\times r$ submatrix obtained by selecting the $r\leq n$ 
columns
${j_1...j_{r}}$ from the original matrix $C$. Analogously,   $C^{i_1...i_{r}}$,
with $r\leq k$, represents the $r\times n$ submatrix obtained by selecting the 
$r$ rows
${i_1, \ldots , i_{r}}$; yet the square matrix $C_{j_1...j_{r}}^{i_1...i_{r}}$, 
with $r
\leq \min \{k,n\}$ is obtained by selecting the indicated rows
and columns. The selection of rows commutes with the selection of columns.
With   
this notation $ \det(C)^{j_1...j_{r}}_{i_1...i_{r}}  = \det 
C^{j_1...j_{r}}_{i_1...i_{r}} $.  We use Cauchy-Binet formula  to find 
an alternative expression of the 
It\^o formula for the subdeterminants:

\begin{lemma}[Cauchy-Binet]\label{CauchyBinet} Let  $A$ be an $(l\times
m)$-matrix and  $B$ an $(m\times l)$-matrix, with  $l\leq m$. Then,
  $$
  \det (A\, B)=\sum_{i_1<i_2<...<i_{l }}
 \det({A}_{i_1...i_{l}})\cdot \det ({B}^{i_1...i_{l}}).
 $$
\end{lemma}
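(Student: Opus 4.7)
The plan is to prove Cauchy-Binet by direct expansion using the multilinearity and alternating property of the determinant in the columns of the product $AB$. First I would write the $k$-th column of $AB$ as the linear combination $\sum_{j=1}^{m} B_{jk}\, A_{\cdot j}$ of the columns of $A$, and expand $\det(AB)$ by multilinearity into a sum over all tuples $(j_1,\ldots,j_l)\in\{1,\ldots,m\}^l$ of terms of the form
$$B_{j_1,1}\cdots B_{j_l,l}\,\det(A_{\cdot j_1},\ldots,A_{\cdot j_l}).$$

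Next I would discard the tuples with repeated indices, which vanish by the alternating property, and regroup the remaining terms according to the underlying strictly increasing sequence $i_1<\cdots<i_l$. Each such group is parametrized by permutations $\sigma\in S_l$ via $j_k=i_{\sigma(k)}$, and reordering the columns back to the natural order produces a sign $\mathrm{sgn}(\sigma)$. This yields
$$\det(AB)=\sum_{i_1<\cdots<i_l}\det(A_{i_1\ldots i_l})\sum_{\sigma\in S_l}\mathrm{sgn}(\sigma)\,B_{i_{\sigma(1)},1}\cdots B_{i_{\sigma(l)},l}.$$

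Finally I would recognize the inner sum as the Leibniz expansion of the determinant of the $l\times l$ matrix whose $(k,j)$-entry is $B_{i_k,j}$, namely the row-submatrix $B^{i_1\ldots i_l}$. Substituting gives the claimed identity. The entire argument is essentially combinatorial bookkeeping; the only place where I expect to need some care is in matching the column-versus-row conventions consistently, since $A_{i_1\ldots i_l}$ is defined by selecting columns while $B^{i_1\ldots i_l}$ is defined by selecting rows, even though in the present setting both produce $l\times l$ matrices whose determinants pair up correctly.
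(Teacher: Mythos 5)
Your argument is correct and complete: expanding each column of $AB$ as $\sum_j B_{jk}A_{\cdot j}$, using multilinearity, killing the tuples with repeated indices by alternation, and regrouping the surviving tuples over strictly increasing index sets with a permutation sign is precisely the standard proof of Cauchy--Binet, and your identification of the inner alternating sum with the Leibniz expansion of $\det(B^{i_1\ldots i_l})$ is the right bookkeeping step. Note, however, that the paper itself offers no proof of this lemma at all --- it simply cites the literature (Tracy and Widom) --- so there is no ``paper proof'' to compare against; your contribution is a self-contained elementary derivation of a fact the authors treat as classical and imported. The only point worth double-checking in your write-up is the convention matching you already flagged: with the paper's notation, $A_{i_1\ldots i_l}$ (selected columns of the $l\times m$ matrix $A$) and $B^{i_1\ldots i_l}$ (selected rows of the $m\times l$ matrix $B$) are both $l\times l$, and your pairing of them is consistent with the displayed formula, so no gap remains.
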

For a proof, see e.g., among many others,  Tracy and Widom \cite{Tracy}.

\subsection{It\^o-Liouville Formula for subdeterminants}

Consider initially that the Stratonovich SDE (\ref{equacao original}) 
takes place in an Euclidean space $\R^n$ with the associated stochastic flow of 
local diffeomorphisms $\varphi_t$. For  equations in  a Riemaninan 
manifold, using local coordinates,  the results  hold, up 
to the corresponging exit stopping times.
Consider the linearized flow $Y_t=D\varphi_t:\R^n
\rightarrow \R^n$, which  satisfies:
\begin{equation}
\label{eqY}dY_t=[D X_0(x_t)]Y_t\ dt
+ \sum_{r=1}^{m}[D X_r (x_t)]Y_t\circ dW^{r}_t.
\end{equation}
where $D X_{r}$ are the derivative matrices of the
corresponding vector fields relative to the canonical basis (covariant 
derivative $\nabla$, respectively, in a Riemannian manifold).

We use the following notation: given two $n\times n$ matrices $A$ 
and
$B$,  denote by  $[A:B :j]$, with $1\leq j\leq n$,  the matrix  obtained 
by
replacing the $j$-th row of $A$ by the
 the $j$-th row of $B$. Hence, for example
\begin{equation}\label{prodOp}
[I_n: DX_{i}:j] =\left(
\begin{array}{ccc}I_{j-1}& &0\\
 && \\ \displaystyle \frac{\partial }{\partial x_{1 }}X_{i}^{j} & \ldots 
& \displaystyle \frac{\partial }{\partial x_{n }}X^{j}_{i}\\
 && \\
0& &I_{n-j-1}
\end{array}\right),
\end{equation}
where `0' above represents zero submatrices with appropriate dimensions.
We also recall  the Laplace formula for the derivatives of the subdeterminants:
\begin{equation} \label{Eq: Laplace}
 \frac{\partial }{\partial x_{i_p j_q}} \dete{i_1, \ldots, i_k}{j_1, 
\ldots,j_k} (C) =
(-1)^{p+q} \dete{i_1,...,\widehat{i_p}, \ldots, i_k}{j_1,...,\widehat{j_q}, 
\ldots
,j_k} (C),
\end{equation}
with $1 \leq p,q \leq k$, 
where the
symbols  $\widehat{i_p}, \widehat{j_q}$ mean that these terms have been
excluded.

\begin{theorem}\label{Thm: Ito formula} For fixed $k$ rows $(i_1, \ldots, i_k)$ 
and $k$ columns $(j_1, 
\ldots,
j_k)$, in the domain of existence of the 
stochastic flow of local diffeomorphism, we have the following It\^o formula for
the corresponding subdeterminant of the linearized flow:
\begin{equation} \label{Eq: Ito-Liouville}
\begin{array}{ccl}
\dete{i_1...i_k}{j_1...j_k}(Y_t)&=& \dete{i_1...i_k}{j_1...j_k}(I_n) \  + \ \displaystyle \sum_{p=1}^k
\int_0^t \dete{i_1...i_k}{j_1...j_k}
[Y_s:(DX_{0})\cdot Y_s: i_p ] \ ds \\
&&  \\ 
& & +\displaystyle \sum_{r=1}^{m}\  \displaystyle \sum_{p=1}^k
\int_0^t \dete{i_1...i_k}{j_1...j_k}
[Y_s:(DX_{r})\cdot Y_s: i_p ]\circ dW^{r}_s.
\end{array}
\end{equation}

\end{theorem}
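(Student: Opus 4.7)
The plan is to apply the Stratonovich chain rule to the smooth function $F: \R^{n\times n} \to \R$ defined by
\[
F(Y) = \dete{i_1\ldots i_k}{j_1\ldots j_k}(Y),
\]
composed with the matrix-valued semimartingale $Y_t$ solving \eqref{eqY}. Since Stratonovich calculus obeys the ordinary chain rule, no It\^o corrections appear and one obtains
\[
\circ\, dF(Y_t) \;=\; \sum_{a,b} \frac{\partial F}{\partial Y_{ab}}(Y_t)\,\circ dY_{t,ab},
\]
where only the indices $a\in\{i_1,\ldots,i_k\}$ and $b\in\{j_1,\ldots,j_k\}$ contribute, because $F$ depends only on those entries.

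Next I would substitute the Laplace formula \eqref{Eq: Laplace}, namely
\[
\frac{\partial F}{\partial Y_{i_p j_q}}(Y_t) \;=\; (-1)^{p+q}\dete{i_1,\ldots,\widehat{i_p},\ldots,i_k}{j_1,\ldots,\widehat{j_q},\ldots,j_k}(Y_t),
\]
and collect terms by the row index $p$. For each fixed $p$, the inner sum over $q$ is precisely the row-$p$ cofactor expansion of the $k\times k$ submatrix obtained from the selected block of $Y_t$ by replacing its $p$-th row by the vector $(v_{j_1},\ldots,v_{j_k})$, where $v$ denotes the $i_p$-th row of the relevant coefficient of \eqref{eqY}. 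In the bracket notation of \eqref{prodOp}, this quantity is
\[
\dete{i_1\ldots i_k}{j_1\ldots j_k}[\,Y_t : V : i_p\,]
\]
for any $n\times n$ matrix $V$ whose $i_p$-th row equals $v$; multilinearity of the determinant in its rows renders the choice of the remaining rows of $V$ irrelevant.

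I would then substitute the drift and diffusion terms of \eqref{eqY}: the $i_p$-th row of $\circ\,dY_t$ is the $i_p$-th row of $(DX_0(x_t))Y_t\,dt + \sum_{r=1}^{m}(DX_r(x_t))Y_t \circ dW^r_t$. Applying the identification above row by row with $V = (DX_0)Y_t$ for the drift and $V = (DX_r)Y_t$ for each diffusion coefficient, and summing over $p=1,\ldots,k$ and $r=1,\ldots,m$, reproduces exactly the integrand on the right-hand side of \eqref{Eq: Ito-Liouville}. Integration over $[0,t]$, together with the initial condition $Y_0 = I_n$, then delivers the announced formula.

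The main difficulty is combinatorial bookkeeping rather than analytical depth: one has to track row and column labels carefully through the cofactor expansion, and verify that the row-replacement bracket $[\,Y_s:(DX_r)Y_s:i_p\,]$ faithfully encodes the directional derivative of the subdeterminant along its $i_p$-th row. The computation is performed on the random time interval where the local flow exists; on a Riemannian manifold one reduces to the Euclidean setting via local coordinates, as noted at the beginning of the section.
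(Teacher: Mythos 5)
Your proposal is correct and follows essentially the same route as the paper: apply the Stratonovich chain rule (no It\^o correction) to the subdeterminant function, insert the coordinate form of the linearized equation, and use the Laplace cofactor formula \eqref{Eq: Laplace} to recognize the sum over $q$ as the cofactor expansion of the row-replaced matrix $[\,Y_s:(DX_r)Y_s:i_p\,]$. The only cosmetic difference is that you make explicit the multilinearity remark that the unreplaced rows of $V$ are irrelevant, which the paper leaves implicit in its bracket notation.
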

\begin{proof} For sake of notation,  use  
$G(C):=\det^{i_1...i_k}_{j_1...j_k}(C)$
 where $C$ is an $n\times
 n$
 real valued matrix. Applying It\^o formula, we have that:
\begin{equation}\label{itodet}
G(Y_t)=G(I)+ \displaystyle\sum_{p, q\leq k}\int_0^t \frac{\partial }{\partial
x_{i_pj_q}}G(Y_s)
\circ dY_{i_pj_q}(s).
\end{equation}
In coordinates, for all $1 \leq i,j \leq n$, equation (\ref{eqY}) leads to:
\begin{equation} \label{eqcoordY}
dY_{i j }(t)
= \displaystyle\sum_{l=1}^n \displaystyle \frac{\partial
}{\partial x_{l}}X^{i}_{0}(x_t) Y_{l j}(t)\ dt
+\sum_{r=1}^{m}\ \sum_{l=1}^n\frac{\partial }{\partial x_{l 
}}X_{r}^{i}(x_t)Y_{l
j}(t)\circ dW^{r}_t.
\end{equation}
From Equations (\ref{itodet}) and (\ref{eqcoordY})  we have that

\[
\begin{array}{ll}G(Y_t)=&G(I) +  \displaystyle \sum_{l=1}^n 
\sum_{p, q\leq k} \int_0^t \frac{\partial }{\partial
x_{i_pj_q}}G(Y_s)
\  \frac{\partial
}{\partial x_{l}}X^{i_p}_{0}(x_s) Y_{l j_p}(s)\ ds
 \\
 & \\
&+ \displaystyle\sum_{r=1}^{m}\ \sum_{l=1}^n \sum_{p, q\leq k} \int_0^t 
\frac{\partial }{\partial
x_{i_pj_q}}G(Y_s)
\  \frac{\partial
}{\partial x_{l}}X^{i_p}_{r}(x_s) Y_{l j_p}(s) \circ dW^r _s.
\end{array}
\] 
By Laplace formula (\ref{Eq: Laplace}), we have that
\[  
G([Y_s:(DX_{r})\cdot Y_s: i_p ])
=\sum_{q\leq k}\frac{\partial }{\partial
x_{i_pj_q}}G(Y_t) \ \sum_{l=1}^n \frac{\partial X_{r}^{i_p}}{\partial x_{l
}}(x_t)Y_{l j_q}.
\]
Hence formula (\ref{Eq: Ito-Liouville}) of the statement follows.
 
\end{proof}

In particular, for the maximal subdeterminant, i.e. with $k=n$,  one recovers the Liouville formula for 
determinants since for all  $r$ in  $\{0,1, \ldots, m\}$ we have that 
\[
  \displaystyle \sum_{p=1}^n
 \dete{i_1,...,i_n}{j_1,...,j_n}
[Y_s:(DX_{r})\cdot Y_s: i_p ] = \det (Y_s) \cdot 
\mathrm{Tr}(DX_r) \ \mathrm{sgn}(i) \ \mathrm{sgn}(j),
\]
where $\mathrm{sgn}(i)$ and $\mathrm{sgn}(j)$  are the parities of the permutations $(i_1,...,i_n)$ and $(j_1,...,j_n)$, respectively.
\begin{corollary}\label{Cor: Ito formula + Cauchy-Binet} For fixed $k$ rows 
$(i_1, \ldots, i_k)$ 
and $k$ columns $(j_1, 
\ldots,
j_k)$, in the domain of existence of the 
stochastic flow of local diffeomorphism, we have the following It\^o formula for
the corresponding subdeterminant of the linearized flow:

\begin{equation} \label{Eq: Ito-Liouville Cauchy-Binet}
\begin{array}{ccl}
\dete{i_1...i_k}{j_1...j_k}(Y_t)&=& \dete{i_1...i_k}{j_1...j_k}(I_n) \  + \displaystyle \sum_{l_1<...<l_k}\  \displaystyle \sum_{p=1}^k
\int_0^t \dete{i_1...i_k}{l_1..l_k}
[I: DX_{0}: i_p ] \cdot \dete{l_1...l_k}{j_1...j_k}(Y_s) \ ds \\
&&  \\ 
& & +\displaystyle \sum_{r=1}^{m}\  \displaystyle \sum_{l_1<...<l_k} \ \displaystyle \sum_{p=1}^k
\int_0^t \dete{i_1...i_k}{l_1...l_k}
[I: DX_{r}: i_p ] \cdot \dete{l_1...l_k}{j_1...j_k}(Y_s) \circ dW^{r}_s.
\end{array}
\end{equation}
\end{corollary}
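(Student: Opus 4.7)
My plan is to derive the Corollary directly from Theorem~\ref{Thm: Ito formula} by applying the Cauchy-Binet identity to the subdeterminants that appear as integrands on its right-hand side. The key observation is a multiplicative factorisation of the argument of those subdeterminants.

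First, I would verify the factorisation
\[
[Y_s:(DX_r)\cdot Y_s: i_p] \; = \; [I_n: DX_r : i_p] \cdot Y_s
\]
for each $r\in\{0,\dots,m\}$ and each $p\in\{1,\dots,k\}$. This is a direct row-by-row computation: multiplying $Y_s$ on the left by the matrix $[I_n:DX_r:i_p]$ (whose rows agree with those of $I_n$ except in row $i_p$, which equals the $i_p$-th row of $DX_r$) reproduces the rows of $Y_s$ except in row $i_p$, where it yields the $i_p$-th row of $(DX_r)\cdot Y_s$. That is precisely the definition of $[Y_s:(DX_r)\cdot Y_s: i_p]$.

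Next, for a product $AB$ of $n\times n$ matrices, the submatrix $(AB)_{j_1\ldots j_k}^{i_1\ldots i_k}$ coincides with the ordinary matrix product $A^{i_1\ldots i_k}\cdot B_{j_1\ldots j_k}$ of the $k\times n$ matrix obtained by selecting rows $i_1,\ldots,i_k$ of $A$ with the $n\times k$ matrix obtained by selecting columns $j_1,\ldots,j_k$ of $B$. Applying Lemma~\ref{CauchyBinet} to this $k\times k$ product gives
\[
\dete{i_1\ldots i_k}{j_1\ldots j_k}(AB) \;=\; \sum_{l_1<\cdots<l_k} \dete{i_1\ldots i_k}{l_1\ldots l_k}(A) \cdot \dete{l_1\ldots l_k}{j_1\ldots j_k}(B).
\]
Setting $A=[I_n:DX_r:i_p]$ and $B=Y_s$ rewrites each integrand of Theorem~\ref{Thm: Ito formula} as a sum over strictly increasing multi-indices $l_1<\cdots<l_k$ of a product of a subdeterminant depending only on the coefficients of the SDE and a subdeterminant of $Y_s$.

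Finally, I would substitute this expansion into the drift and each Stratonovich integral of formula (\ref{Eq: Ito-Liouville}) and interchange the finite sums over $p$ and over the $l$-indices with the integration to obtain (\ref{Eq: Ito-Liouville Cauchy-Binet}). There is no convergence or measurability subtlety here because the sums are finite and the integrands are continuous semimartingales on the domain of existence of $\varphi_t$; the only mild point is to keep track of the indices, so the main (non)obstacle is purely notational bookkeeping.
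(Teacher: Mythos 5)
Your argument is exactly the paper's proof: the identity $[Y_s:(DX_r)\cdot Y_s:i_p]=[I_n:DX_r:i_p]\cdot Y_s$ followed by an application of the Cauchy--Binet formula (Lemma~\ref{CauchyBinet}) to each such product inside formula (\ref{Eq: Ito-Liouville}). Your version simply spells out the row-by-row verification and the interchange of the finite sums with the integrals, which the paper leaves implicit.
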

\begin{proof} For each $i_p$, $1\leq p \leq k$ we have that
\[
[Y_s: DX_{r} \cdot Y_s: i_p]=[I: D X_{r}: i_p]\cdot 
Y_s.
\]
Then, apply Cauchy-Binet formula, Lemma (\ref{CauchyBinet}), to each product $[I:DX_{i}: 
j]\cdot
Y_s$.

\end{proof}

In this article we are particularly interested on applying formulae (\ref{Eq: Ito-Liouville}) and  (\ref{Eq: Ito-Liouville Cauchy-Binet})  for the
subdeterminant of the  lower--right $k\times k$ submatrices of $Y_t$,  cf. 
Proposition
\ref{Prop: analyt local obstruct decomp} and Theorem \ref{Thm: globdec}.

\subsection{Applications on decomposition of flows}

Suppose that a Riemannian manifold $M$ is endowed with a horizontal foliation which is transversely orientable and assume that the stochastic flow $\varphi_t$ is complete.  Theorem \ref{Thm: globdec} says that the global decomposition of the  stochastic
flow $\varphi_t$ is determined  by the subdeterminant
$\det_{n-k+1,...,n}^{n-k+1,...,n}(D\varphi_t)$, with respect to a biregular positively tranversal orientable coordinate system. Corollary \ref{Cor: Ito formula + Cauchy-Binet} states that this subdeterminant can be written in terms of  products of simpler subdeterminants. In our application here, if some of these subdeterminants vanish, then the equation for the relevant subdeterminant is a linear SDE, cf. Proposition \ref{Prop: sufficient condition} below.  The geometrical meanings are given after the proposition. 
We have the following sufficient condition on the subdeterminants  for the existence of global decomposition  for all $t\geq 0$:

\begin{proposition}  \label{Prop: sufficient condition} Assume that the 
 $(M,
\mathcal{H}, \mathcal{V})$ is transversely
orientable for the horizontal
foliation. Suppose that for all $0\leq r \leq m$  we have that 
 \begin{equation}\label{eq: hipo}
\dete{n-k+1,..., n}{l_1,...,l_k}
[I: DX_{r}: i ] \cdot \dete{l_1,...,l_k}{n-k+1, ..., n}(Y_t) = 0,
\end{equation}
for all $(n-k+1)\leq i\leq n $, all $(l_1>...>l_k)$ except possibly $(l_1,...,l_k)= (n-k+1, \ldots, n)$ and all $t\geq 0$. Then $\varphi_t$ is
decomposable for all $t\geq 0.$
\end{proposition}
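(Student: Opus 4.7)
The strategy is to specialize Corollary \ref{Cor: Ito formula + Cauchy-Binet} to the unique subdeterminant whose positivity matters, namely
\[
Z_t \ := \ \dete{n-k+1,\ldots,n}{n-k+1,\ldots,n}(Y_t),
\]
and to use hypothesis (\ref{eq: hipo}) to annihilate every off-diagonal term in the Cauchy-Binet expansion. What remains is a scalar linear Stratonovich SDE for $Z_t$, whose exponential solution is pathwise strictly positive. Theorem \ref{Thm: globdec} then delivers the global decomposition.

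Concretely, I would set $(i_1,\ldots,i_k) = (j_1,\ldots,j_k) = (n-k+1,\ldots,n)$ in (\ref{Eq: Ito-Liouville Cauchy-Binet}). The initial value is $\dete{n-k+1,\ldots,n}{n-k+1,\ldots,n}(I_n) = \det I_k = 1$. By (\ref{eq: hipo}), for every $r \in \{0,1,\ldots,m\}$, every $p$, and every tuple $(l_1<\cdots<l_k) \neq (n-k+1,\ldots,n)$, the integrand
\[
\dete{n-k+1,\ldots,n}{l_1,\ldots,l_k}[I:DX_r:i_p] \cdot \dete{l_1,\ldots,l_k}{n-k+1,\ldots,n}(Y_s)
\]
vanishes identically along the trajectory, so each such summand contributes nothing to either the drift or the noise integrals. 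Only the diagonal term $(l_1,\ldots,l_k) = (n-k+1,\ldots,n)$ survives, yielding (with the convention $W^0_t=t$)
\begin{equation*}
dZ_t \ = \ \sum_{r=0}^m a_r(x_t)\, Z_t \circ dW^r_t, \qquad Z_0 = 1,
\end{equation*}
where $a_r(x) := \sum_{p=1}^k \dete{n-k+1,\ldots,n}{n-k+1,\ldots,n}[I:DX_r(x):i_p]$ is smooth and bounded on the compact $M$.

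This scalar linear Stratonovich SDE admits the closed form
\begin{equation*}
Z_t \ = \ \exp\!\left( \sum_{r=0}^m \int_0^t a_r(x_s) \circ dW^r_s \right),
\end{equation*}
which is pathwise strictly positive for all $t \geq 0$. In particular $\dete{n-k+1,\ldots,n}{n-k+1,\ldots,n}(D\varphi_t(x)) > 0$ for every $x \in M$ and every $t \geq 0$ almost surely, so Theorem \ref{Thm: globdec} immediately yields the global decomposition $\varphi_t = \xi_t \circ \psi_t$ for all $t \geq 0$.

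The only delicate point I anticipate is the bookkeeping that confirms (\ref{eq: hipo}) really annihilates every cross term in both the drift and the $m$ noise sums of (\ref{Eq: Ito-Liouville Cauchy-Binet})---once this is in place, the exponential representation of the resulting scalar linear Stratonovich equation makes the positivity of $Z_t$ automatic and the conclusion follows from Theorem \ref{Thm: globdec}.
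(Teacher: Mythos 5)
Your proof is correct and follows essentially the same route as the paper: plug the hypothesis into the Cauchy--Binet It\^o formula of Corollary \ref{Cor: Ito formula + Cauchy-Binet} so that only the diagonal block survives, observe that the lower-right subdeterminant then solves a scalar linear Stratonovich SDE started at $1$, hence stays (strictly) positive, and conclude via Theorem \ref{Thm: globdec}. Your explicit exponential representation is a slightly more detailed justification of the paper's statement that the solution of the linear SDE with nonzero initial condition never vanishes.
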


\begin{proof}The hypotesis (\ref{eq: hipo}) together
with the fact that $\varphi_0=Id$ implies
that equation  (\ref{Eq: Ito-Liouville Cauchy-Binet}) reduces to
$$\begin{array}{ll}
\dete{n-k+1,...,n}{n-k+1,...,n}(Y_t)=&1+
\displaystyle\sum_{i= n-k+1}^{n}
\int_0^t\dete{n-k+1, \ldots ,n}{n-k+1,...,n}
([I:\nabla X_0:i])\dete{n-k+1,...,n}{n-k+1,...,n}
(Y_s)\ ds
\\
& +\displaystyle \sum_{r=1}^{m}\ \sum_{i= n-k+1}^{n}
\int_0^t\dete{n-k+1, \ldots ,n}{n-k+1,...,n}
([I:\nabla X_{r}:i])\dete{n-k+1,...,n}{n-k+1,...,n}
(Y_s)\circ dW^r_s 
\end{array}$$
Therefore $\det _{n-k+1, ..., n}^{n-k+1, \ldots , n}(Y_t)$ is the solution of
a linear SDE, with nonzero initial condition, 
hence,
it never vanishes. The result follows by Theorem (\ref{Thm: globdec}).

\end{proof}

\begin{corollary} If the stochastic flow  $\varphi_t$ satisfies 
$\varphi_t(\mathcal{V}(p))\subset \mathcal{V}(\varphi_t(p))$
for all $p\in M$ then $\varphi_t$ is decomposable globally for all $t\geq 0$.

\end{corollary}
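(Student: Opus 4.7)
My plan is to deduce the corollary from Proposition \ref{Prop: sufficient condition} by verifying that its hypothesis (\ref{eq: hipo}) is automatically satisfied when vertical leaves are preserved. Fix $p\in M$ and work in biregular charts about $p$ and $\varphi_t(p)$, with coordinates $(x,y)\in\R^{n-k}\times\R^k$; in such charts the vertical plaques are the slices $\{x=\mathrm{const}\}$. The hypothesis $\varphi_t(\mathcal{V}(p))\subset\mathcal{V}(\varphi_t(p))$ forces the source plaque $\{x=x(p)\}$ to land inside a single target plaque, on which the horizontal coordinate is constant; hence $\varphi_t^1$ does not depend on $y$ and
\[
Y_t=D\varphi_t=\begin{pmatrix}\partial\varphi_t^1/\partial x & 0\\ \partial\varphi_t^2/\partial x & \partial\varphi_t^2/\partial y\end{pmatrix},
\]
so the $(n-k)\times k$ upper-right block of $Y_t$ vanishes identically on $U_p$.

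For any $k$-tuple $(l_1,\ldots,l_k)\neq(n-k+1,\ldots,n)$, at least one index $l_i$ lies in $\{1,\ldots,n-k\}$, so the $l_i$-th row of $Y_t$ is zero in the columns $n-k+1,\ldots,n$, making the minor $\dete{l_1,\ldots,l_k}{n-k+1,\ldots,n}(Y_t)$ contain a zero row and therefore vanish. This kills the product in (\ref{eq: hipo}) for every such $(l_1,\ldots,l_k)$, every $0\leq r\leq m$ and every index $i$, independently of the value of the first factor $\dete{n-k+1,\ldots,n}{l_1,\ldots,l_k}[I:DX_r:i]$. Proposition \ref{Prop: sufficient condition} then yields global decomposability of $\varphi_t$ for all $t\geq 0$.

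The one delicate point is the passage from the global condition $\varphi_t(\mathcal{V}(p))\subset \mathcal{V}(\varphi_t(p))$ to the coordinate identity $\partial\varphi_t^1/\partial y=0$: one must arrange that the image of the source plaque lies inside a \emph{single} plaque of the target chart, rather than scattering over several plaques of the same global vertical leaf. This is achieved by shrinking $U_p$ using the continuity of $\varphi_t$, so that the connected image of the source plaque meets the target chart in a single plaque; once this is done, the coordinate identity holds pointwise on $U_p$ and the remaining argument is purely algebraic. (Alternatively, a direct application of Theorem \ref{Thm: globdec} also works: the block triangular form gives $\dete{n-k+1,\ldots,n}{n-k+1,\ldots,n}(Y_t)=\det(\partial\varphi_t^2/\partial y)$, which is nonzero because $\det Y_t\neq 0$, and which equals $1$ at $t=0$.)
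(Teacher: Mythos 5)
Your proof is correct and follows essentially the same route as the paper: the hypothesis $\varphi_t(\mathcal{V}(p))\subset\mathcal{V}(\varphi_t(p))$ makes the upper-right block of $Y_t=D\varphi_t$ vanish in biregular coordinates, so every minor $\dete{l_1,\ldots,l_k}{n-k+1,\ldots,n}(Y_t)$ with $(l_1,\ldots,l_k)\neq(n-k+1,\ldots,n)$ has a zero row and vanishes, and Proposition \ref{Prop: sufficient condition} applies. Your extra care about plaques versus leaves (and the alternative direct argument via Theorem \ref{Thm: globdec}) is sound but not a different method; the pointwise statement also follows immediately from $D\varphi_t(T_q\mathcal{V})\subset T_{\varphi_t(q)}\mathcal{V}$.
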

\begin{proof} In fact, in this case, $\frac{\partial \varphi_t^i}{\partial x_j}=0$ for all $n-k+1 \leq j \leq n $ and $1\leq i\leq n-k$, hence 
\[
\dete{l_1,...,l_k}{n-k+1, ..., n}(Y_t) = 0,
\]
for all $(l_1>...>l_k)$ except $(l_1,...,l_k)= (n-k+1, \ldots, n)$. The result follows by the proposition above.

\end{proof}
The corollary above is illustrated by linear systems in $\R^n\setminus \{0\}$ with the spherical and radial foliations as the horizontal and vertical coordinates, respectively (cf. comments by the end of Prop. \ref{Prop: analyt local obstruct decomp}). 

\begin{corollary} If the covariant derivative $\nabla_v X_r$ is horizontal for any direction $v\in TM$,  
for all $0\leq r \leq  n$ then $\varphi_t$ is decomposable globally for all $t\geq 0$.

\end{corollary}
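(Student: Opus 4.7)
The plan is to verify the hypothesis of Proposition \ref{Prop: sufficient condition} by establishing something slightly stronger: that the first factor in every summand of the condition (\ref{eq: hipo}), namely the subdeterminant $\dete{n-k+1,\ldots,n}{l_1,\ldots,l_k}[I:DX_r:i]$, already vanishes on its own for every $i\in\{n-k+1,\ldots,n\}$, every ascending tuple $(l_1,\ldots,l_k)$, and every $r$.

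First, I would translate the geometric hypothesis into an algebraic statement about the matrix $DX_r$ in a biregular chart. In such a chart the vertical directions are spanned by $\partial_{n-k+1},\ldots,\partial_n$, so saying that $\nabla_v X_r$ is horizontal for all tangent $v$ is the same as saying that the vertical components of $\nabla_{\partial_j} X_r$ vanish for each $j$. Since the entries of $DX_r$ are $(DX_r)^i_j = (\nabla_{\partial_j} X_r)^i$ (this is the identification made right after (\ref{eqY})), the hypothesis amounts to the vanishing of all entries in the rows $i=n-k+1,\ldots,n$ of $DX_r$.

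Next I would compute the required subdeterminants. Fix $i\in\{n-k+1,\ldots,n\}$. By construction, $[I:DX_r:i]$ equals $I_n$ except in its $i$-th row, which is replaced by the $i$-th row of $DX_r$; by the previous step, this row is identically zero. Now the subdeterminant $\dete{n-k+1,\ldots,n}{l_1,\ldots,l_k}[I:DX_r:i]$ is the determinant of the $k\times k$ submatrix selecting rows $n-k+1,\ldots,n$. Because $i$ is one of these selected rows and equals zero, the submatrix has a zero row, so the determinant vanishes for every choice of $(l_1,\ldots,l_k)$.

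This makes the hypothesis (\ref{eq: hipo}) of Proposition \ref{Prop: sufficient condition} trivially satisfied (the second factor is irrelevant since the first one is zero), and global decomposability for all $t\geq 0$ follows at once. In fact the same vanishing, applied to $(l_1,\ldots,l_k)=(n-k+1,\ldots,n)$, shows that the linear SDE appearing in the proof of Proposition \ref{Prop: sufficient condition} has all coefficients equal to zero, so $\dete{n-k+1,\ldots,n}{n-k+1,\ldots,n}(Y_t)\equiv 1$. I do not expect a real obstacle here; the only point that deserves attention is keeping straight that \emph{rows} of $DX_r$ (not columns) carry the index of the vector-field component, so that ``$\nabla X_r$ horizontal'' correctly produces zero rows, rather than zero columns, in the precise indices required by the subdeterminant selection.
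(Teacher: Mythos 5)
Your proposal is correct and follows essentially the same route as the paper: both verify Proposition \ref{Prop: sufficient condition} by observing that horizontality of $\nabla_v X_r$ makes the last $k$ rows of $DX_r$ vanish, so every subdeterminant $\dete{n-k+1,\ldots,n}{l_1,\ldots,l_k}[I:DX_r:i]$ is zero (including for $(l_1,\ldots,l_k)=(n-k+1,\ldots,n)$), forcing $\dete{n-k+1,\ldots,n}{n-k+1,\ldots,n}(Y_t)\equiv 1$. Your write-up merely spells out the zero-row argument that the paper leaves implicit.
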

\begin{proof} In fact, in this case, 
\[
\dete{n-k+1,..., n}{l_1,...,l_k}
[I: DX_{r}: i ]=0, 
\]
for all $(n-k+1)\leq i\leq n $, for all $(l_1>...>l_k)$ including $(l_1,...,l_k)= (n-k+1, \ldots, n)$ and all $t\geq 0$. Then 
\[
\dete{n-k+1,...,n}{n-k+1,...,n}(Y_t) \equiv 1,
\]
The result follows by  Proposition \ref{Prop: sufficient condition}.

\end{proof}

\end{document}